\DeclareFontFamily{OT1}{rsfs}{}
\DeclareFontShape{OT1}{rsfs}{n}{it}{<-> rsfs10}{}
\DeclareMathAlphabet{\mathscr}{OT1}{rsfs}{n}{it}
\theoremstyle{plain}
\newtheorem{theorem}{Theorem}[section]
\newtheorem{lemma}[theorem]{Lemma}
\newtheorem{proposition}[theorem]{Proposition}
\theoremstyle{definition}
\newtheorem{example}{Example}[section]
\theoremstyle{remark}
\newtheorem{remark}[theorem]{Remark}
\def\Re{\mathrm{Re}}
\def\Im{\mathrm{Im}}
\newtheorem*{acknowledgement}{Acknowledgement}
\begin{document}

\date\today

\title[Nonminimal infinite type models in $\mathbb C^2$]
{Infinitesimal CR automorphisms and stability groups of nonminimal infinite type 
models in $\mathbb C^2$}
\author[V.~T.~Ninh]{Van Thu Ninh\textit{$^{1,2}$}}
\author[T.~N.~O.~Duong]{Thi Ngoc Oanh Duong}
\author[V.~H.~Pham]{Van Hoang Pham}
\author[H.~Kim]{Hyeseon Kim}

\address{(V.~T.~Ninh)}
\address{\textit{$^{1}$}~Department of Mathematics,   VNU University of Science, Vietnam National University at Hanoi, 334 Nguyen Trai, Thanh Xuan, Hanoi, Vietnam}
 \address{\textit{$^{2}$}~Thang Long Institute of Mathematics and Applied Sciences,
Nghiem Xuan Yem, Hoang Mai, HaNoi, Vietnam}
\email{thunv@vnu.edu.vn}

\address[T.~N.~O.~Duong]{Phung Hung Secondary School, 55 Nguyen Thai Hoc, Quang Trung, Son Tay, Hanoi, Vietnam}
\email{duongoanh25.6@gmail.com}

\address[V.~H.~Pham]{Department of Mathematics, Vietnam
National University at Hanoi, 334 Nguyen Trai str., Hanoi, Vietnam}
\email{hoangabcmnpxyz95@gmail.com}

\address[H.~Kim]{Research Institute of Mathematics, Seoul National University, 1 Gwanak-ro, Gwanak-gu, Seoul 08826, Republic of Korea}
\email{hop222@snu.ac.kr}

\subjclass[2000]{Primary 32M05; Secondary 32H02, 32H50, 32T25.}
\keywords{automorphism group, holomorphic vector field, infinite type point, real hypersurface.}

\begin{abstract}
We determine infinitesimal $\mathrm{CR}$ automorphisms and stability groups of real hypersurfaces in $\mathbb C^2$ in the case when the hypersurface is nonminimal and of infinite type at the reference point.
\end{abstract}

\maketitle

\section{Introduction and the statement of main results}
The purpose of this article is to describe the spaces of infinitesimal $\mathrm{CR}$ automorphisms and stability groups of real hypersurfaces in $\mathbb C^2$ such that they are nonminimal in the sense of Tumanov \cite{Tum89} and of infinite type at the origin in the sense of D'Angelo \cite{Dan82}. 

We now introduce some notations which are needed to state our main results. Let $(M,p)$ be the germ at $p$ of a $C^{\infty}$-smooth real hypersurface $M$ in $\mathbb C^n$, $n\geq2$. We denote by $\mathrm{Aut}(M)$ the $\mathrm{CR}$ automorphism group of $M$. For each $p\in M$, we denote by $\mathrm{Aut}(M,p)$ the set of germs at $p$ of biholomorphisms mapping $M$ into itself and fixing the point $p$. In addition, we denote by $\mathfrak{aut}(M,p)$ the set of germs of holomorphic vector fields in $\mathbb C^n$ at $p$ whose real part is tangent to $M$. With this notation, a smooth vector field germ $(X,p)$ on $M$ is called an \emph{infinitesimal $\mathrm{CR}$ automorphism germ at $p$ of $M$} if there exists an element in $\mathfrak{aut}(M,p)$ such that its real part is equal to $X$ on $M$. We also denote by $\mathfrak{aut}_{0}(M,p)$ the set of all elements $H\in\mathfrak{aut}(M,p)$ for which $H$ vanishes at $p$.  

The study of $\mathrm{CR}$ geometry on real hypersurfaces in $\mathbb C^n$ is relatively well-developed in the case of \emph{rigid hypersurfaces} (see \cite{KL2015}, \cite{KosL2016}, \cite{Sta95} and the references therein). Here, we say that a $C^{\infty}$-smooth real hypersurface $M$ through the origin in $\mathbb C^n$ is \emph{rigid} if there exist coordinates $(z,w)\in\mathbb C^{n-1}\times\mathbb C$ and a $C^{\infty}$-smooth function $F$ near the origin such that $M$ is given by an equation of the form
\begin{equation}\label{rigid equation}
\mathrm{Re}\;w=F(z,\bar z)
\end{equation}(cf. \cite{BRT85} and \cite{Sta95}). For a certain class of rigid hypersurfaces of finite type in the sense of D'Angelo in $\mathbb C^2$, we refer the reader to \cite{Sta91} which addresses the existence of infinitesimal $\mathrm{CR}$ automorphisms. However, if we move our attention to the case of \emph{rigid hypersurfaces of infinite type}, then we necessarily encounter more complicated procedure to get such geometric object due to the computational difficulty and the lack of literatures in the setting of infinite type (see \cite{HN2016} and the references therein). As a significant result which has inspired the present paper, Hayashimoto and Ninh \cite{HN2016} investigated an \emph{infinite type model} $(M'_P,0)$ in $\mathbb C^2$ which is defined by
\begin{equation}\label{HN model hypersurface}
M'_{P}:=\left\{(z,w)\in\mathbb C^2: \mathrm{Re}\;w+ P(z)=0\right\},
\end{equation}where $P$ is a non-zero germ of a real-valued $C^{\infty}$-smooth function at the origin vanishing to infinite order at $z=0$. More precisely, the associated $\mathrm{Aut}(M'_P,0)$, $\mathfrak{aut}(M'_P,0)$, $\mathfrak{aut}_{0}(M'_P,0)$ were explicitly described under the variance of the zero set of the function $P$ defined in \eqref{HN model hypersurface}. Furthermore, it follows from the definition that $M'_P$ given in \eqref{HN model hypersurface} is a rigid real hypersurface of infinite type. 

We now employ the concept of a nonminimal hypersurface (this term is coined in \cite{Tum89}) which has also inspired the present paper. By following the definition in \cite{Tum89}, a $\mathrm{CR}$ manifold $N$ is \emph{minimal at a point $p\in N$} if there are no submanifolds passing through $p$ of smaller dimension but with the same $\mathrm{CR}$ dimension. In this sense, one can say that a real hypersurface $N\in\mathbb C^2$ is \emph{nonminimal at a point $p\in N$} if there exists a germ of a complex hypersurface $E$ through $p$ which is contained in $N$ (cf. \cite{JL2013} and \cite{KKZ2017}). In addition, a germ at the origin of a real hypersurface $(N,0)$ in $\mathbb C^2$ is a \emph{ruled hypersurface} if there exist coordinates $(z,w)\in\mathbb C^2$ such that $N$ is given by an equation of the form
\[
\mathrm{Im}\;w=(\mathrm{Re}\;w) A(z,\bar z),
\]where $A(z,\bar z)$ does not vanish identically (for more details on $A$ in the case when $N$ is a ruled real analytic hypersurface of infinite type, see Eq.~$(6)$ and the consecutive arguments in \cite[Section~$3$]{KL2015}). Moreover, a ruled hypersurface is known as a crucial prototype in considering local equivalence problem of nonminimal real analytic hypersurfaces in $\mathbb C^2$. We further say that a germ at $p$ of a real hypersurface $(N,p)$ in $\mathbb C^2$ is \emph{$m$-nonminimal} ($m\geq1$) at $p$ if there exist local coordinates $(z,w)\in\mathbb C^2$, $p$ corresponds to $0$, close by $0$, such that $N$ is given by an equation of the form
\begin{equation}\label{definition of m-nonminimal condition}
\mathrm{Im}\;w={(\mathrm{Re}\;w)}^m\psi(z,\bar z,\mathrm{Re}\;w),
\end{equation}where $\psi(z,0,\mathrm{Re}\;w)=\psi(0,\bar z,\mathrm{Re}\;w)=0$ and $\psi(z,\bar z,0)$ does not vanish identically (cf. \cite{JL2013} and \cite{KosL2016}). In particular, if $(N,p)$ is a germ at $p$ of a real analytic hypersurface which is $1$-nonminimal at $p$, then $\mathrm{Aut}(N,p)$ constitutes a finite dimensional Lie group (see \cite[Theorem~$1$]{JL2013}). Moreover, a class of real analytic $1$-nonminimal hypersurface in $\mathbb C^2$ is also meaningful in the sense that such nonminimal condition is related to the degeneration of the Levi form as the natural second-order invariant of a real analytic hypersurface (cf. \cite[Introduction]{JL2013}).

In this paper, we first investigate the spaces of infinitesimal $\mathrm{CR}$ automorphisms and stability groups of a $1$-nonminimal infinite type model $(M_P,0)$ in $\mathbb C^2$ which is defined by 
\begin{equation}\label{our 1-nonminimal infinite type model}
M_{P}:=\left\{(z_1,z_2)\in\mathbb C^2: \mathrm{Re}\;z_1+(\mathrm{Im}\;z_1)P(z_2)=0\right\},
\end{equation}where $P$ is a non-zero germ of a real-valued $C^{\infty}$-smooth function at $0$ vanishing to infinite order at $z_2=0$.

Before stating our main results, we now prepare further notations. For each $r>0$, let us denote by $\triangle_{r}$ the complex disk of radius $r$ centred at the origin in $\mathbb C$. We also denote by $\triangle^{*}_{r}$ the punctured disk $\triangle_{r}\setminus\{0\}$. For a sufficiently small $\epsilon_{0}>0$ and a $C^{\infty}$-smooth function $P:\triangle_{\epsilon_0}\rightarrow\mathbb R$, we denote by $S_{\infty}(P)$ the set of all points $z\in\triangle_{\epsilon_0}$ for which $\nu_{z}(P)=+\infty$, where $\nu_{z}(P)$ is the vanishing order of $P(z+\zeta)-P(z)$ at $\zeta=0$. In addition, we denote by $P_{\infty}(M_P)$ the set of all points of infinite type in $M_{P}$. We note that it is not hard to see that 
\begin{equation}\label{P_infty(our 1-nonminimal infinite type model)}
P_{\infty}(M_P)\supset\left\{(it-tP(z_2),z_2): t\in\mathbb R, z_2\in S_{\infty}(P)\right\}.
\end{equation}
However, we could not have the equality (see Example \ref{0-e.g.} in Section \ref{two examples}).  A similar example to Example \ref{0-e.g.} shows that in general the equality  
$$
P_{\infty}(M'_P)=\left\{(it-P(z_2),z_2): t\in\mathbb R, z_2\in S_{\infty}(P)\right\}
$$
also could not hold for the rigid infinite type model $(M'_{P},0)$ (see \cite[Remark~$1$]{HN2016}). Therefore, we think that the assumption ``the connected component of $0$ in $S_\infty(P)$ is $\{0\}$" given in the statements of \cite[Theorem $1$ and Theorem $2$]{HN2016} should be replaced by ``the connected component of $(0,0)$ in $P_\infty(M'_P)$ is $\{(it, 0)\in \mathbb C^2\colon t\in \mathbb R\}$".

We now ready to state our main results. For the case of a $1$-nonminimal infinite type model, we have three main theorems in this paper. Theorem~\ref{main THM1} comes under the case that special conditions on holomorphic vector fields determine the precise form of local defining functions. The other two main theorems explain the converse situation. Such division on the main results is originated in the work of Hayashimoto and Ninh \cite{HN2016}. In what follows, as commented in \cite[Introduction]{HN2016}, all functions, mappings, hypersurfaces, and so on, will be understood as germs at the reference points unless stated otherwise.

\begin{theorem}\label{main THM1}
Let $(M_P,0)$ be a $C^\infty$-smooth hypersurface in $\mathbb C^2$ defined by the equation $\rho(z)=\rho(z_1,z_2):=\Re\;z_1+(\Im\;z_1)P(z_2)=0$, where $P$ is a $C^\infty$-smooth function on a neighborhood of the origin in $\mathbb C$ satisfying:
\begin{enumerate}
\item[$(i)$] The connected component of $z_2=0$ in the zero set of $P$ is $\{0\}$;
\item[$(ii)$] $P$ vanishes to infinite order at $z_2=0$.
\end{enumerate}
Then any holomorphic vector field vanishing at the origin tangent to $(M_P,0)$ is either of the form $\alpha z_1\partial z_1$ for some $\alpha\in\mathbb R$, or after a change of variable in $z_2$, of the form $\alpha z_1\partial z_1+i\beta z_2\partial z_2$ for some $\alpha\in\mathbb R$ and  $\beta\in\mathbb R^{*}$, in which case $M_P$ is rotationally symmetric, that is, $P(z_2)\equiv P(|z_2|)$.
\end{theorem}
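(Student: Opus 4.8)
The plan is to translate the tangency condition into a single real identity on $M_P$ and then extract, order by order, the structure of a holomorphic vector field $H=f\,\partial_{z_1}+g\,\partial_{z_2}$ with $f(0)=g(0)=0$. Writing $\rho=\Re[z_1(1-iP(z_2))]$, one has $H\rho=\frac{1}{2}\,f\,(1-iP)+(\Im z_1)\,g\,P_{z_2}$, and since $\rho$ is real the tangency of $\Re H$ is equivalent to $\Re(H\rho)=0$ on $M_P$. Near $0$ the hypersurface is a graph $z_1=y_1\,(i-P(z_2))$ with $y_1=\Im z_1\in\mathbb R$, and substituting this parametrization turns the tangency condition into $\Re\big[\sum_k c_k(z_2)\,y_1^{\,k}\big]=0$ for all small real $y_1$. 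Since the left-hand side is real-analytic in $y_1$, I would conclude $\Re c_k(z_2)\equiv 0$ for every $k$, where (writing $f=\sum_j f_j(z_2)z_1^{\,j}$, $g=\sum_j g_j(z_2)z_1^{\,j}$ and $Q=i-P$, $R=1+P^2=|Q|^2$) each $c_k$ is an explicit combination of $f_k$, $g_{k-1}$, $Q$ and $P_{z_2}$.

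The engine of the argument is the elementary fact that if $h$ is holomorphic near $0\in\mathbb C$ and $\Re h$ vanishes to infinite order, then $h$ is an imaginary constant: in polar coordinates the degree-$n$ part of $\Re h$ is $r^{n}|a_n|\cos(n\theta+\arg a_n)$, which cannot vanish to infinite order unless $a_n=0$. Condition (ii) makes $P$ and all its derivatives flat at $z_2=0$, so every term of $c_k$ carrying a factor $P$ or $P_{z_2}$ is flat. Peeling these off, the non-flat (analytic) part of each equation $\Re c_k=0$ is the real part of a holomorphic function of $z_2$ and must therefore vanish. From $k=0$ this forces $f_0\equiv0$; from $k=1$ it forces $f_1=\alpha\in\mathbb R$; and from $k\ge2$ it forces each $f_k$ to be a constant, so that $f=f(z_1)$ depends on $z_1$ alone.

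Next I would return to the exact (not merely modulo-flat) equations, which after simplification read $R\,a_k\,\Im[(1+iP)^{k-1}]=2\,\Re[\,Q^{k-1}g_{k-1}P_{z_2}\,]$, where $a_k:=i^{k-1}f_k\in\mathbb R$ and $a_1=\alpha$. The case $k=1$ collapses to the single surviving constraint $\Re(g_0\,P_{z_2})=0$, while for $k\ge2$ a comparison of leading flat orders (the left side is $a_k(k-1)P+O(P^3)$) is meant to force $f_k=0$ for $k\ge2$ and $g_j=0$ for $j\ge1$, leaving $H=\alpha z_1\,\partial_{z_1}+g_0(z_2)\,\partial_{z_2}$. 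This bookkeeping—disentangling the coupled flat identities so as to kill the higher-order $z_1$-coefficients of both components while using only that $P\not\equiv0$—is the step I expect to be the main obstacle, since $P$ and $P_{z_2}$ are both flat and cannot be compared naively; condition (i) and the non-vanishing of $P$ near $0$ will have to be invoked carefully here.

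Finally I would read $\Re(g_0\,P_{z_2})=0$ as $Y P=0$ for the real field $Y=g_0\,\partial_{z_2}+\overline{g_0}\,\partial_{\bar z_2}$, i.e. $P$ is constant along the orbits of the real flow of $g_0\,\partial_{z_2}$. If $g_0\equiv0$ we land in the first alternative $H=\alpha z_1\,\partial_{z_1}$. If $g_0\not\equiv0$ then $g_0(0)=0$, and I claim the fixed point $0$ must be of rotational (center) type. Indeed, a one-variable holomorphic field with $g_0'(0)=\lambda\neq0$ is holomorphically linearizable to $\lambda z_2\,\partial_{z_2}$, and if $\Re\lambda\neq0$—or if $g_0$ vanishes to order $\ge2$—then some orbit accumulates at $0$; since $P$ is continuous and constant along that orbit with $P(0)=0$, it would vanish on a connected subset of the zero set of $P$ through $0$, contradicting (i). Hence $\lambda=i\beta$ with $\beta\in\mathbb R^*$, and a holomorphic change of variable in $z_2$ (which preserves the model form, replacing $P$ by $P\circ\phi^{-1}$ and leaving $\alpha z_1\,\partial_{z_1}$ intact) straightens $g_0\,\partial_{z_2}$ to $i\beta z_2\,\partial_{z_2}$; invariance of $P$ under the resulting rotation flow yields $P(z_2)\equiv P(|z_2|)$, giving the second alternative.
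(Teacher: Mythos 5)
Your setup is sound and tracks the paper closely: parametrizing $M_P$ by $z_1=y_1(i-P(z_2))$, extracting coefficients in $y_1$, and using flatness of $P$ plus the fact that a holomorphic function with flat real part is an imaginary constant reproduces exactly the paper's conditions \eqref{conditions on a in main THM1}, and your exact equations $R\,a_k\,\Im[(1+iP)^{k-1}]=2\,\Re[Q^{k-1}g_{k-1}P_{z_2}]$ are the correct analogue of \eqref{the 2nd condition induced in main THM1}. Your endgame is also essentially right and even a bit more conceptual than the paper's: reading $\Re(g_0P_{z_2})=0$ as invariance of $P$ under the flow of $g_0\partial_{z_2}$, noting that any orbit accumulating at $0$ would make $P$ vanish on a nontrivial connected subset of its zero set through $0$ (contradicting $(i)$), and linearizing the surviving center case $g_0'(0)=i\beta$ is a clean variant of the paper's Subcase~2.1 flow argument combined with its normalization $b_0(z_2)=i\beta z_2$ via a change of variables. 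But the step you yourself flag as ``the main obstacle'' is precisely the analytic core of the theorem, and it is missing: from the coupled identity above you cannot conclude $a_k=0$ and $g_{k-1}\equiv0$ for $k\ge2$ by any ``comparison of leading flat orders,'' because $P$ and $P_{z_2}$ are both flat at $z_2=0$ and admit no pointwise comparison; nothing a priori excludes a flat term $\Re[Q^{k-1}g_{k-1}P_{z_2}]$ exactly matching $a_k(k-1)P+O(P^3)$.

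The paper resolves exactly this coupling with Proposition~\ref{technical lemma1:THM1} (Lemma~$7$ of \cite{HN2016}): an identity of the form $\Re\left[(az^m+g_2)P^{n+1}+bz^{\ell}(1+g_0)P_z+g_1P\right]\equiv0$ forces $ab=0$, and the proof is not order counting but a dynamical argument in the $z_2$-variable. One integrates along trajectories $\gamma'=b\gamma^{\ell}(1+g_0(\gamma))$, sets $u(t)=\tfrac12\log|P(\gamma(t))|$ (Lemma~\ref{technical lemma2:LEM1}, which again uses condition $(i)$, guarantees $P\neq0$ along $\gamma$), and derives a contradiction between the forced exponential decay of $|P(\gamma(t))|$ coming from the $aP$-term and the actual behavior of $|\gamma(t)|$ under the flow; the paper's Subcases~2.1 and 2.2 and the elimination of $b_{m_1}$, $m_1\ge1$, in its Case~1 are all instances of this mechanism. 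Without it (or an equivalent), your proposal cannot rule out $g_j\not\equiv0$ for some $j\ge1$ with $g_0\equiv0$ (the paper's Case~2), nor kill the higher $f_k$; note also that once rotational symmetry is established the paper disposes of $a_{s,0}$, $s\ge2$, by the separate elementary device of treating \eqref{Discriminant-2 in main THM1} as a polynomial identity in the real variable $P(z_2)$, which you could borrow, but which does not touch the coupled $f$--$g$ identities. So the proposal is incomplete at its decisive point, even though its scaffolding and its final dynamical step are correct.
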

 
\begin{remark}
The condition $(i)$ in Theorem~\ref{main THM1} simply shows that the set $\{z_2\in\mathbb C: P(z_2)=0\}$ does not contain any curve in $\mathbb C$. In contrast to this theorem, Theorem~\ref{main THM3} below allows the possibility that the curve $\mathrm{Re}\;z_2=0$ is contained in the zero set of $P$. Moreover, the condition $(ii)$ and consideration of the points given in \eqref{P_infty(our 1-nonminimal infinite type model)} provide a first step for the proof of Theorem~\ref{main THM1}.
\end{remark}

\begin{theorem}\label{main THM2}
Let $(M_P,0)$ be a $C^\infty$-smooth hypersurface in $\mathbb C^2$ defined by the equation $\rho(z)=\rho(z_1,z_2):=\Re\;z_1+(\Im\;z_1)P(z_2)=0$, where $P$ is a $C^\infty$-smooth function on a neighborhood of $0$, vanishing to infinite order at $z_2=0$, and satisfying:
\begin{enumerate}
\item[$(i)$] $P(z_2)\notequiv0$ on a neighborhood of $z_2=0$;
\item[$(ii)$] the connected component of $(0,0)$ in $P_\infty(M_P)$ is $\{(it, 0)\in \mathbb C^2\colon t\in \mathbb R\}$.
\end{enumerate}Then the following assertions hold:
\begin{enumerate}
\item[$(a)$] $\mathfrak{aut}(M_{P},0)=\mathfrak{aut}_{0}(M_{P},0)$.
\item[$(b)$] If $\mathfrak{aut}_{0}(M_{P},0)=\{\alpha z_1\partial z_1:\alpha\in\mathbb R\}$, then
\[
\mathrm{Aut}(M_{P},0)=G_{2}(M_{P},0),
\]where $G_{2}(M_{P},0)$ is the set of all $\mathrm{CR}$ automorphisms of $M_{P}$ defined by
\[
(z_1,z_2)\mapsto (Cz_1,g_{2}(z_2))
\]for some constant $C\in\mathbb R^{*}$ and some holomorphic function $g_2$ with $g_{2}(0)=0$ and $|g'_{2}(0)|=1$ defined on a neighborhood of the origin in $\mathbb C$ satisfying that $P(g_2(z_2))\equiv P(z_2)$.
\end{enumerate}
\end{theorem}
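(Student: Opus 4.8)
For part $(a)$, write a holomorphic vector field as $H=f\partial_{z_1}+g\partial_{z_2}$; the tangency condition is $\Re\big(f\rho_{z_1}+g\rho_{z_2}\big)=0$ on $M_P$, where $\rho_{z_1}=\tfrac12(1-iP)$ and $\rho_{z_2}=(\Im z_1)P_{z_2}$. Putting $y=\Im z_1$ and substituting the defining relation $z_1=iy-yP(z_2)$, I would first exploit that $P$ and all its derivatives are flat at $z_2=0$: modulo terms that are flat in $z_2$, every $P$-factor drops out and the equation collapses to $\Re f(iy,z_2)\equiv 0$. Since $f(iy,\cdot)$ is holomorphic with flat real part, expanding in $z_2$ forces each positive Taylor coefficient to vanish, so $f=f(z_1)$ is independent of $z_2$ with $\Re f(iy)\equiv 0$. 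Now the remaining identity is genuinely flat, and I would expand it in powers of $y$. The coefficient of $y^0$ is $\Re\big(\tfrac12 f(0)(1-iP)\big)$; writing $f(0)=i\mu$ with $\mu\in\mathbb R$ this equals $\tfrac12\mu P$, so $P\not\equiv0$ gives $\mu=0$, i.e.\ $f(0)=0$. Using $(1-iP)(i-P)=i(1+P^2)$ and $f'(0)\in\mathbb R$, the coefficient of $y^1$ reduces to $\Re\big(P_{z_2}\,g(0,z_2)\big)\equiv 0$.

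To finish $(a)$ I would rule out $g(0,0)\neq0$. The relation $\Re\big(P_{z_2}\,g(0,\cdot)\big)\equiv 0$ says exactly that $P$ is constant along the orbits of the nonvanishing real vector field $\Re\big(g(0,z_2)\partial_{z_2}\big)$; passing to flow-box coordinates, $P$ depends only on the transverse variable and is flat there, so all its derivatives vanish along the orbit $C$ through $0$, whence $C\subseteq S_\infty(P)$ is a nontrivial connected curve through $0$. Then \eqref{P_infty(our 1-nonminimal infinite type model)} exhibits $\{(it-tP(z_2),z_2):t\in\mathbb R,\ z_2\in C\}$ as a connected subset of $P_\infty(M_P)$ containing $\{(it,0):t\in\mathbb R\}$ and points with $z_2\neq0$, so the connected component of $(0,0)$ is strictly larger than $\{(it,0)\}$, contradicting $(ii)$. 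Hence $H(0)=0$ and $\mathfrak{aut}(M_P,0)=\mathfrak{aut}_0(M_P,0)$. I expect the genuine difficulty of $(a)$ to be precisely this: the infinite-type hypothesis makes Taylor-coefficient matching useless, so one must isolate the flat part of the tangency equation and invoke $(ii)$.

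For $(b)$, the inclusion $G_2(M_P,0)\subseteq\mathrm{Aut}(M_P,0)$ is immediate, since $\Psi(z_1,z_2)=(Cz_1,g_2(z_2))$ with $C\in\mathbb R^*$ and $P\circ g_2=P$ satisfies $\rho\circ\Psi=C\rho$. For the reverse inclusion take $\Phi=(\phi_1,\phi_2)\in\mathrm{Aut}(M_P,0)$ and use that $\mathfrak{aut}_0(M_P,0)$ is preserved under push-forward by automorphisms fixing $0$ (tangency is preserved because $\Phi(M_P)=M_P$, and vanishing at $0$ because $\Phi(0)=0$). Since $z_1\partial_{z_1}\in\mathfrak{aut}_0(M_P,0)$ — one checks $\Re(z_1\partial_{z_1})\rho=\tfrac12\rho$ — the pushed-forward field $\Phi_*(z_1\partial_{z_1})$ again lies in $\mathfrak{aut}_0(M_P,0)=\{\alpha z_1\partial_{z_1}\}$. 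This gives $z_1\partial_{z_1}\phi_2=0$ and $z_1\partial_{z_1}\phi_1=\alpha\phi_1$, so $\phi_2=\phi_2(z_2)$ and $\phi_1=K(z_2)z_1^{\alpha}$; holomorphy together with $\Phi(0)=0$ and invertibility of $d\Phi_0$ forces $\alpha=1$, i.e.\ $\phi_1=K(z_2)z_1$. This push-forward invariance, which is exactly where the hypothesis $\mathfrak{aut}_0(M_P,0)=\{\alpha z_1\partial_{z_1}\}$ is used, is the crucial step of $(b)$.

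It remains to normalize. Writing $\rho=\Re\big(z_1(1-iP(z_2))\big)$ and comparing real-linear forms in $z_1$, the relation $\rho\circ\Phi=a\rho$ with $a$ real becomes $K(z_2)\big(1-iP(\phi_2)\big)=a\big(1-iP(z_2)\big)$. Imposing $a=\bar a$ and reducing modulo flat terms yields $K\equiv\bar K$, so $\Im K$ is a flat real-analytic function and hence vanishes identically; thus the $\mathbb R$-valued holomorphic function $K$ is a nonzero constant $C\in\mathbb R^*$. Substituting $K=C$ back into the reality relation gives $P(\phi_2(z_2))=P(z_2)$. Finally, if $|\phi_2'(0)|\neq1$ then, after replacing $\phi_2$ by $\phi_2^{-1}$ if necessary, the iterates satisfy $\phi_2^{\circ n}(z_2)\to0$, so $P(z_2)=\lim_n P(\phi_2^{\circ n}(z_2))=P(0)=0$ for all $z_2$, contradicting $P\not\equiv0$ from $(i)$; hence $|\phi_2'(0)|=1$. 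Therefore $\Phi(z_1,z_2)=(Cz_1,g_2(z_2))\in G_2(M_P,0)$, completing $(b)$.
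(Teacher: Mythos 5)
Your proof is correct, and it reaches the paper's conclusions by a route that genuinely differs in two places. For part $(a)$, the paper integrates $H$ to a one-parameter family $\{\phi_t\}\subset\mathrm{Aut}(M_P,0)$ and uses that the orbit $\phi_t(0,0)$ must stay in the connected component of $(0,0)$ in $P_\infty(M_P)$, which by hypothesis $(ii)$ is $\{(it,0)\colon t\in\mathbb R\}$; this gives $\Re\,h_1(0,0)=h_2(0,0)=0$ in one stroke, and only the case $\Im\,h_1(0,0)\neq0$ requires the Taylor computation with $P\notequiv0$. You instead stay entirely at the level of the tangency identity: flatness of $P$ collapses it to show $h_1$ depends on $z_1$ alone with $\Re f(0)=0$, the $y^0$-coefficient kills $\Im f(0)$ using $(i)$, and the $y^1$-coefficient $\Re\bigl(P_{z_2}\,g(0,\cdot)\bigr)\equiv0$ is upgraded, via the flow-box argument, to a nontrivial curve $C\subseteq S_\infty(P)$ through $0$ whenever $g(0,0)\neq0$, contradicting $(ii)$ through the inclusion \eqref{P_infty(our 1-nonminimal infinite type model)} — precisely the mechanism the paper isolates in Remark~\ref{connected-component}, used here in the contrapositive. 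Your version buys something concrete: it never presupposes that a germ in $\mathfrak{aut}(M_P,0)$ integrates to a subgroup of automorphisms, a step the paper simply assumes (``We assume that $\{\phi_t\}\ldots$''). For part $(b)$, your push-forward identity $\Phi_*(z_1\partial_{z_1})=\alpha z_1\partial_{z_1}$, yielding the Euler equations $z_1\partial_{z_1}\phi_1=\alpha\phi_1$ and $z_1\partial_{z_1}\phi_2=0$, is the infinitesimal form of the paper's conjugation $F_t=f\circ T_{-t}\circ f^{-1}$ followed by differentiation in $t$; the content is the same, and your variant again avoids manipulating flows. Your normalization — comparing the two real-linear forms in $z_1$ to get $K(z_2)\bigl(1-iP(\phi_2)\bigr)=a\bigl(1-iP(z_2)\bigr)$ with $a$ real, whence $\Im K$ is flat, harmonic, hence identically zero — is an equivalent repackaging of the paper's quotient formula for $P(g_2)$; the one point to state carefully is that $a$ is a priori a real-valued \emph{function} of $z_2,\bar z_2$ rather than a constant, which your pointwise algebra handles without change (and which yields $a\equiv C$ afterwards). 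Finally, where the paper invokes Lemma~\ref{modulus g' lemma}, you reprove $|g_2'(0)|=1$ directly by iterating $P\circ g_2=P$ under a contraction when $|g_2'(0)|\neq1$; this is a correct, self-contained substitute for the citation.
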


\begin{remark}\label{connected-component}
Suppose that the connected component of $(0,0)$ in $P_{\infty}(M_P)$ is $\{(it,0): t\in\mathbb R\}$. Then, by definition, the connected component of $0$ in $S_{\infty}(P)$ is just $\{0\}$. This fact provides a crucial ingredient in the proof of Theorem~\ref{main THM2}.
\end{remark}

In the case when $S_{\infty}(P)$ contains a non-trivial connected component of $(0,0)$ which contrasts with the condition $(ii)$ of Theorem~\ref{main THM2}, for instance $M_P$ is \emph{tubular}, we obtain the following theorem.

\begin{theorem}\label{main THM3}
Let $\widetilde P$ be a $C^{\infty}$-smooth function defined on a neighborhood of $0$ in $\mathbb C$ satisfying:
\begin{enumerate}
\item[$(i)$] $\widetilde P(x)\notequiv0$ on a neighborhood of $x=0$ in $\mathbb R$;
\item[$(ii)$] the connected component of $(0,0)$ in $P_\infty(M_{\widetilde P})$ is $\{(it, 0)\in \mathbb C^2\colon t\in \mathbb R\}$.
\end{enumerate}Denote by $P$ a function defined by setting $P(z_2):=\widetilde{P}(\Re\;z_2)$ with a further condition that $P(z_2)$ vanishes to infinite order at $z_2=0$. Then the following assertions hold:
\begin{enumerate}
\item[$(a)$] $\mathfrak{aut}_{0}(M_{P},0)=\{\alpha z_1\partial_{z_1}: \alpha\in\mathbb R\}$ and the Lie algebra $\mathfrak{g}=\mathfrak{aut}(M_{P},0)$ admits the decomposition
\[
\mathfrak{g}=\mathfrak{g}_1\oplus\mathfrak{g}_0,
\]where $\mathfrak{g}_1=\{\alpha z_1\partial_{z_1}:\alpha\in\mathbb R\}$  and $\mathfrak{g}_0=\{i\beta\partial_{z_2}:\beta\in\mathbb R\}$.
\item[$(b)$] $\mathrm{Aut}(M_{P},0)$ is either $\{(z_1,z_2)\mapsto (tz_1,z_2): t\in\mathbb R^{*}\}$ or $\{(z_1,z_2)\mapsto (tz_1,\pm z_2): t\in\mathbb R^{*}\}$, where the latter case happens only if $P(z_2)=P(-z_2)$.
\item[$(c)$] If $P_\infty(M_P)=\{(it, is)\in \mathbb C^2\colon t,s\in \mathbb R\}$, then $\mathrm{Aut}(M_{P})$ can be decomposed into either 
\[
T^{1}(M_{P})\oplus T^{2}(M_{P})
\]or
\[
T^{2}(M_{P})\oplus T^{3}(M_{P}),
\]where $T^1(M_{P})=\{(z_1,z_2)\mapsto (s z_1,z_2): s\in\mathbb R^{*}\}$, $T^{2}(M_{P})=\{(z_1,z_2)\mapsto (z_1,z_2+it): t\in\mathbb R\}$ and $T^3(M_{P})=\{(z_1,z_2)\mapsto (s z_1,\pm z_2): s\in\mathbb R^{*}\}$. The latter case happens only if $P(z_2)=P(-z_2)$.
\end{enumerate}
\end{theorem}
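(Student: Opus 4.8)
The plan is to derive all three parts from a single tangency identity. For a holomorphic vector field $H = f\partial_{z_1} + g\partial_{z_2}$, using $\rho_{z_1} = \frac{1}{2}(1 - iP)$ and $\rho_{z_2} = (\Im z_1)P_{z_2}$ with $P = \widetilde P(\Re z_2)$, the tangency of $\Re H$ to $M_P$ reduces to
\[
\Re f + P\,\Im f + (\Im z_1)\,\widetilde P'(\Re z_2)\,\Re g = \lambda\rho
\]
for some real function $\lambda$; on $M_P$ I then substitute $z_1 = t(i - P)$ with $t = \Im z_1\in\mathbb R$ and expand in powers of $t$. Two facts will be used throughout: first, the real and imaginary parts of a holomorphic function of $z_2$ are harmonic, so if such a part vanishes to infinite order at $z_2 = 0$ it vanishes identically; second, both $P$ and $\widetilde P'$ vanish to infinite order at $z_2 = 0$, while by condition $(ii)$ (cf.\ Remark~\ref{connected-component}) the origin is isolated among the infinite-type points of $\widetilde P$, so $\widetilde P'$ is nonflat on a set with nonempty interior.

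For $(a)$ I first restrict the identity to the complex curve $E = \{z_1 = 0\}\subset M_P$, where it becomes $\Re f(0,z_2) + P\,\Im f(0,z_2) = 0$; since $\Re f(0,\cdot)$ is harmonic and forced to vanish to infinite order, it vanishes identically, whence $f(0,z_2)\equiv 0$. Writing $f = \sum_{j\ge1}a_j(z_2)z_1^j$ and $g = \sum_{j\ge0}b_j(z_2)z_1^j$ and collecting the coefficient of $t^j$, each $j$ yields an equation of the shape
\[
(1 + P^2)\,\Im\!\big[a_j(i - P)^{j-1}\big] = \widetilde P'(\Re z_2)\,\Re\!\big[b_{j-1}(i - P)^{j-1}\big].
\]
The flat factor $\widetilde P'$ on the right forces the harmonic function $\Im[a_j(z_2)\,i^{j-1}]$ to vanish to infinite order, hence identically, so every $a_j$ is a constant with $a_j i^{j-1}\in\mathbb R$. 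The $j = 1$ equation then reads $\widetilde P'(\Re z_2)\,\Re b_0 = 0$, giving $b_0\in i\mathbb R$, and the equations for $j\ge2$ equate a flat multiple of $P$ with a flat multiple of $\widetilde P'$; here condition $(ii)$ is used to separate the $\Im z_2$-dependence and conclude $a_j = 0$ and $b_{j-1} = 0$ for $j\ge2$. This gives $\mathfrak{aut}(M_P,0) = \{\alpha z_1\partial_{z_1} + i\beta\partial_{z_2}\}$, and imposing vanishing at $0$ kills the transverse term, yielding $\mathfrak{aut}_0(M_P,0) = \mathfrak g_1$ and the decomposition $\mathfrak g = \mathfrak g_1\oplus\mathfrak g_0$.

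For $(b)$, the same harmonic--flatness argument shows that $E = \{z_1 = 0\}$ is the \emph{unique} germ of complex curve contained in $M_P$ through $0$, so every $\Phi = (F_1,F_2)\in\mathrm{Aut}(M_P,0)$ fixes $E$ and satisfies $F_1(0,z_2)\equiv0$; matching $\rho\circ\Phi = a\rho$ order by order in $z_1$ as in $(a)$ forces the normal form $\Phi(z_1,z_2) = (Cz_1, g_2(z_2))$ with $C\in\mathbb R^{*}$, $g_2(0) = 0$, $|g_2'(0)| = 1$, and $\widetilde P(\Re g_2(z_2))\equiv\widetilde P(\Re z_2)$. Differentiating the last identity gives $\widetilde P'(\Re g_2)\,g_2'(z_2) = \widetilde P'(\Re z_2)$; on the open set where $\widetilde P'(\Re z_2)\ne0$ the right-hand side is real, so $g_2'$ is real there and therefore a real constant of modulus one, whence $g_2(z_2) = \pm z_2$, the sign $-$ occurring only when $P(z_2) = P(-z_2)$. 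This is precisely $(b)$. For $(c)$, the maps $(z_1,z_2)\mapsto(z_1, z_2 + it)$ preserve $M_P$ (as $P$ depends only on $\Re z_2$) and form $T^2(M_P)$, integrating $i\partial_{z_2}$; conversely any $\Phi\in\mathrm{Aut}(M_P)$ preserves $P_\infty(M_P) = \{(it,is)\}$ and the unique curve $E$, so $\Phi(0)\in E\cap P_\infty = \{(0,is)\}$, and composing with a suitable element of $T^2$ reduces $\Phi$ to an element of $\mathrm{Aut}(M_P,0)$ treated in $(b)$. Assembling the stabilizer with $T^2$ and tracking the $\pm z_2$ alternative yields the decomposition $T^1\oplus T^2$ or $T^2\oplus T^3$.

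I expect the main obstacle to be the coefficient analysis for $j\ge2$ in $(a)$ (and its analogue in the normal-form step of $(b)$): once the $a_j$ are known to be constant, one must still exclude a nonzero $a_j$ balanced by a compensating coefficient $b_{j-1}$. Since both sides of the corresponding equation are flat at $0$, the elementary ``harmonic plus infinite-order vanishing implies zero'' dichotomy no longer applies directly, and the delicate point is to exploit condition $(ii)$ --- the isolatedness of $0$ among the infinite-type points of $\widetilde P$ --- to control the locus where $\widetilde P'$ is nonflat and thereby separate the holomorphic $z_2$-dependence of $b_{j-1}$ from the flat factors $P$ and $\widetilde P'$.
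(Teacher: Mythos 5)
Your outline reproduces the paper's general framework for the infinitesimal part (parametrize $M_P$ by $z_1=t(i-P)$, expand in $t$, exploit harmonicity of real parts of holomorphic data against the flatness of $P$ and $\widetilde P'$), and your coefficient identity $(1+P^2)\,\Im\bigl[a_j(i-P)^{j-1}\bigr]=\widetilde P'(\Re z_2)\,\Re\bigl[b_{j-1}(i-P)^{j-1}\bigr]$ matches the paper's. But the step you yourself flag as the ``main obstacle'' --- excluding, for $j\ge2$, a nonzero constant $a_j$ balanced by a compensating $b_{j-1}$, and excluding $b_{j-1}\ne0$ when $a_j=0$ --- is precisely where the theorem lives, and your sketch offers no argument, only the hope that condition $(ii)$ ``separates the $\Im z_2$-dependence.'' In fact the paper closes this step \emph{without} condition $(ii)$: taking $(m_0,n_0)$ minimal with $b_{m_0,n_0}\ne0$, it solves the identity for $\widetilde P'(x)$ as a quotient; if $n_0\ge1$ the quotient genuinely depends on $y=\Im z_2$ (since $P=\widetilde P(x)$ does not), contradicting that $\widetilde P'$ depends only on $x$; if $n_0=0$ it yields $\widetilde P'(x)\sim\widetilde P^2(x)$, and integrating $\widetilde P'/\widetilde P^2$ bounds $1/\widetilde P$, so $\widetilde P(x)\nrightarrow0$ as $x\to0$, contradicting flatness; the case $h_1-\alpha z_1\equiv0$, $h_2\nequiv0$ dies by the same nondegeneracy observation. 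Condition $(ii)$ enters elsewhere: it identifies the connected component of $(0,0)$ in $P_\infty(M_P)$ as $\{(is,is')\colon s,s'\in\mathbb R\}$, which the paper uses through the flow ($\phi_t(0,0)\in P_\infty(M_P)$) to get $\Re h_1(0,0)=\Re h_2(0,0)=0$ and hence the decomposition $\mathfrak g=\mathfrak g_1\oplus\mathfrak g_0$; your direct derivation of $f(0,z_2)\equiv0$ and $b_0=i\beta$ constant could substitute for that flow argument, but only once the $j\ge2$ analysis is actually done.

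The second genuine gap is in $(b)$: ``matching $\rho\circ\Phi=a\rho$ order by order in $z_1$ $\ldots$ forces the normal form $\Phi=(Cz_1,g_2(z_2))$'' is an assertion, not an argument. Since $P$ is merely $C^\infty$ and flat, the composition $\widetilde P(\Re F_2)$ admits no useful power-series expansion in $z_1$, and there is no evident direct route from invariance to $F_2$ being independent of $z_1$ and $F_1$ linear in $z_1$. The paper's device --- absent from your proposal --- is to conjugate the one-parameter groups $T^1_t(z_1,z_2)=(e^tz_1,z_2)$ and $T^2_t(z_1,z_2)=(z_1,z_2+it)$ by $f$: by part $(a)$ the generators of $f\circ T^j_{-t}\circ f^{-1}$ lie in $\mathfrak g_1\oplus\mathfrak g_0$, and the resulting functional-differential equations integrate to $f(z_1,z_2)=(C_1z_1e^{i\delta^2_1z_2},-\delta^2_2z_2)$, after which invariance and flatness force $\delta^2_1=0$, $C_1\in\mathbb R^{*}$, and $P(-\delta^2_2z_2)=P(z_2)$ with $|\delta^2_2|=1$. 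Granting the normal form, your endgame is correct and even cleaner than the paper's (differentiating $\widetilde P(\Re g_2)\equiv\widetilde P(\Re z_2)$ gives $g_2'$ real on the open set where $\widetilde P'(\Re z_2)\ne0$, hence $g_2=\pm z_2$), and your reduction in $(c)$ via $T^2$ is viable --- though ``$\Phi$ preserves the unique curve $E$'' needs the classification of complex curves in $M_P$ through the points $(is_0,is_0')$, not just through $0$; the paper instead shows $s_0=0$ by a direct Taylor-coefficient computation using $(i)$. As written, then, the core of $(a)$ and the normal-form step of $(b)$ are missing, not merely delicate.
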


In addition, we also investigate an analogue of Theorem~\ref{main THM1} for an $m$-nonminimal infinite type model $(M_{P,m},0)$ with $m>1$ in $\mathbb C^2$ which is defined by
\begin{equation}\label{our m-nonminimal infinite type model}
M_{P,m}:=\left\{(z_1,z_2)\in\mathbb C^2: \mathrm{Im}\;z_1-{(\mathrm{Re}\;z_1)}^mP(z_2)=0\right\},
\end{equation}where $P$ is a non-zero germ of a real-valued $C^{\infty}$-smooth function at the origin, which vanishes to infinite order at $z_2=0$. Due to the variance of the choice of the constant $m$ in \eqref{our m-nonminimal infinite type model}, the procedure to analyze the associated holomorphic vector fields becomes more complicated than that of a $1$-nonminimal infinite type model $(M_P,0)$ defined above. For the convenience of exposition, we shall proceed the assertion for the case of $(M_{P,m},0)$  with $m>1$ separately in Appendix.

The organization of the paper is described as follows: In Section~\ref{section for the proof of main THM1}, we provide the proof of Theorem~\ref{main THM1} for which certain conditions on holomorphic vector fields determine the precise form of local defining functions. As the converse of this situation, we next provide the proofs of Theorem~\ref{main THM2} and Theorem~\ref{main THM3} in Section~\ref{section for the proofs of main THM2 and main THM3}. In Section~\ref{two examples}, we first elaborate a counterexample in addressing the significance about the converse inclusion of Eq.~\eqref{P_infty(our 1-nonminimal infinite type model)}. We further present several examples in the same section as analogues of those in \cite{HN2016}. In addition, an analogue of Theorem~\ref{main THM1} for an $m$-nonminimal infinite type model $(M_{P,m},0)$ with $m>1$ will be investigated in Appendix.

\section{Analysis of holomorphic tangent vector fields}\label{section for the proof of main THM1}
This section is devoted to the proof of our first main result Theorem~\ref{main THM1}. Let us first prepare two crucial technical ingredients for the proof of Theorem~\ref{main THM1}. The following proposition will be treated also in the assertion for an $m$-nonminimal infinite type model $(M_{P,m},0)$ with $m>1$ in Appendix.

\begin{proposition}[{\cite[Lemma~$7$]{HN2016}}]\label{technical lemma1:THM1}
Let $P:\triangle_{\epsilon_0}\rightarrow\mathbb R$ be a $C^\infty$-smooth function satisfying that the connected component of $z=0$ in the zero set of $P$ is $\{0\}$ and that $P$ vanishes to infinite order at $z=0$. If $a,b$ are complex numbers and if $g_0, g_1, g_2$ are $C^\infty$-smooth functions defined on $\triangle_{\epsilon_0}$ satisfying:
\begin{enumerate}
\item[$(\mathrm A1)$] $g_0(z)=O(|z|), g_1(z)=O({|z|}^\ell)$, and $g_2(z)=o({|z|}^m)$;
\item[$(\mathrm A2)$] $\Re\left[(az^m+g_2(z))P^{n+1}(z)+bz^{\ell}(1+g_0(z))P_z(z)+g_1(z)P(z)\right]\equiv0$ on $\triangle_{\epsilon_0}$
\end{enumerate}for any non-negative integers $l, m$ and $n$ except for the following two cases
\begin{enumerate}
\item[$(\mathrm E1)$] $\ell=1$ and $\Re\;b=0$;
\item[$(\mathrm E2)$] $m=0$ and $\Re\;a=0$,
\end{enumerate}then $ab=0$.
\end{proposition}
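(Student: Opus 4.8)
The plan is to argue by contradiction: assume $ab\neq0$, so that $a\neq0$ and $b\neq0$, and to produce a violation of $(\mathrm{A2})$ by testing the identity on a carefully chosen sequence tending to the origin. The first step is to exploit that $P$ is real-valued. At points where $P(z)\neq0$ we may divide $(\mathrm{A2})$ by the real factor $P(z)$, which commutes with $\Re$, rewriting the hypothesis as
\[
\Re\Big[(az^m+g_2(z))P^{n}(z)+bz^{\ell}(1+g_0(z))\tfrac{P_z(z)}{P(z)}+g_1(z)\Big]=0,
\]
and to recognize the logarithmic derivative $\tfrac{P_z}{P}=\partial_z\log|P|$, which is the quantity responsible for the blow-up that will drive the contradiction.

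Next I would manufacture the test points. Since $P\not\equiv0$ near $0$ yet vanishes to infinite order, set $M(r):=\max_{|z|=r}|P|$; then $M(r)=o(r^N)$ for every $N$, so $\log M(r)/\log r\to+\infty$. For each small $r$ pick $z_r$ with $|z_r|=r$ and $|P(z_r)|=M(r)$. Writing $z=re^{i\theta}$ and using $\partial_\theta P=-2\Im(zP_z)$ and $\partial_rP=\tfrac2r\Re(zP_z)$, extremality in $\theta$ yields $\Im(z_rP_z(z_r))=0$, so the quantity $\mu_r:=z_rP_z(z_r)/P(z_r)$ is real; moreover the envelope identity $\mu_r=\tfrac r2(\log M)'(r)$, valid for almost every $r$, together with $\log M(r)/\log r\to+\infty$, forces $|\mu_r|\to\infty$ along a sequence $r_k\to0$. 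Thus at $z_k:=z_{r_k}$ the factor $P_z$ is large: one has $|P(z_k)|=M(r_k)=:M_k$ and $|P_z(z_k)|=|\mu_k|M_k/r_k$.

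The core step is then a magnitude comparison of the three summands of $(\mathrm{A2})$ at $z_k$. The term $g_1P$ is always negligible against the $P_z$-term, since their ratio is of order $r_k/|\mu_k|\to0$; comparing the $P^{n+1}$-term with the $P_z$-term through the ratio $|\mu_k|\,r_k^{\ell-1-m}\,M_k^{-n}$ isolates a dominant term, and in each configuration I would show that its real part cannot be cancelled by the subordinate ones. This is exactly where the excluded configurations enter: because $P(z_k)$ and $\mu_k$ are real, the real part of the $P^{n+1}$-term with $m=0$ equals, to leading order, $\Re(a)\,P^{n+1}$, and the real part of the $P_z$-term with $\ell=1$ equals, to leading order, $\pm\Re(b)\,\mu_kM_k$; these vanish precisely when $\Re a=0$ (case $(\mathrm E2)$) and $\Re b=0$ (case $(\mathrm E1)$), which is why those two configurations must be set aside.

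The hard part will be the phase control in the intermediate regimes, where the dominant monomial prefactor is nonconstant ($m\geq1$ for the $P^{n+1}$-term, or $\ell\geq2$ for the $P_z$-term). There the leading real part is $M_k^{n+1}\Re[a z_k^{m}]$ or $\mu_kM_k\,\Re[bz_k^{\ell-1}]$, and a priori the uncontrolled arguments $\theta_k$ of the extremal points could make $\cos(\arg a+m\theta_k)$ or $\cos(\arg b+(\ell-1)\theta_k)$ tend to $0$ and nullify the argument. I expect to resolve this either by selecting the radii $r_k$ so that $\theta_k$ avoids the finitely many bad directions, or by replacing pointwise evaluation with an angular average $\int_0^{2\pi}(\cdots)\,d\theta$ that annihilates the nonconstant Fourier modes and isolates the offending coefficient; it is also in these regimes that the two hypotheses $a\neq0$ and $b\neq0$ must be used jointly rather than one at a time.
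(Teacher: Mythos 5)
Your strategy is genuinely different from the paper's, but it has a real gap at its core step, and in a way you partly anticipate. The paper proves this proposition dynamically (following \cite[Lemma~3]{KN2015}, with Lemma~\ref{technical lemma2:LEM1} as the new ingredient): one integrates along trajectories of the field $b z^{\ell}(1+g_0)\partial_z$, sets $u=\tfrac12\log|P\circ\gamma|$, and $(\mathrm A2)$ becomes the ODE $u'(t)=-P^{n}(\gamma(t))\,\Re\left(a\gamma^{m}(t)+o({|\gamma(t)|}^{m})\right)+O({|\gamma(t)|}^{\ell})$, after which the trajectory asymptotics are played against the infinite-order vanishing of $P$, with Lemma~\ref{technical lemma2:LEM1} guaranteeing $P\neq0$ persists along the flow. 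The crucial service the flow performs is \emph{phase control}: for $\ell\geq2$ trajectories approach $0$ along distinguished (attracting) directions, for $\ell=1$ with $\Re\,b\neq0$ they spiral in at an exponential rate, so the sign of the dominant term is pinned down by the dynamics. Your static test at the maximizers $z_k$ of $|P|$ on circles has no substitute for this: the angle $\theta_k$ of the maximizer is dictated by $P$, not by you, so ``selecting the radii $r_k$ so that $\theta_k$ avoids the bad directions'' is not an available move; and the angular-average fallback fails because every factor in $(\mathrm A2)$ --- $P$, $P_z$, $g_0$, $g_1$, $g_2$ --- depends on $\theta$, so integrating in $\theta$ does not annihilate the nonconstant Fourier modes of $z^{m}$ or $z^{\ell-1}$ against them and isolates nothing. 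Since the excluded cases $(\mathrm E1)$, $(\mathrm E2)$ correspond to genuine solutions (rotationally symmetric $P$ with a rotational field), phase degeneracy is a real phenomenon here, not a technicality one can wave away; your argument is complete only in the constant-prefactor configurations (e.g.\ $\ell=1$, $\Re\,b\neq0$), which are the easy ones.

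There is a second, quantitative gap even granting phase control. Your envelope argument yields $|\mu_k|\to\infty$ along \emph{some} sequence, with no rate; but the comparison ratio $|\mu_k|\,r_k^{\ell-1-m}M_k^{-n}$ needs rates in several regimes. For $n=0$ and $m<\ell-1$ you would need $|\mu_k|\,r_k^{\ell-1-m}\to\infty$, which unboundedness of $\mu_k$ alone does not give; for $\ell=0$ the hypothesis only gives $g_1=O(1)$, so $\Re\,g_1(z_k)$ is a bounded term comparable to the $m=0$, $n=0$ leading term rather than subordinate to it; and even your claim that $g_1P$ is ``always negligible against the $P_z$-term'' already presupposes the cosine factor $\Re\bigl[b z_k^{\ell-1}(1+g_0(z_k))\bigr]/|b|r_k^{\ell-1}$ is bounded below, i.e.\ the very phase control that is missing. (A smaller point: your hypotheses allow $P$ to vanish on whole circles or annuli accumulating at $0$ --- the connected-component assumption does not forbid this --- so $\log M$ need not be absolutely continuous on a full interval; this can be repaired by working on the subintervals where $M>0$, but it should be said.) In short: an attractive and genuinely different Hadamard-type attack, sound in its preparatory steps, but the decisive case analysis is not closed, and the two proposed repairs do not work as described; the paper's trajectory method is precisely the device that resolves the difficulty your approach runs into.
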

The proof of this proposition proceeds along the similar lines as that of Lemma~$3$ in~\cite{KN2015}.~(Notice that $P$ was assumed to be positive on $\triangle^{*}_{\epsilon_0}$ in~\cite{KN2015}.) For the sake of brevity we shall omit routine arguments, except \eqref{crucial o.d.e. in technical lemma2} below. The following lemma assures the existence of a modification of Eq.~$(7)$ in~\cite{KN2015}, which is a main ingredient for the proof.
For the convenience of the reader, we provide the proof of the following lemma.
\begin{lemma}[{\cite[Lemma~$8$]{HN2016}}]\label{technical lemma2:LEM1}
Let $P, a, b, g_0, g_1, g_2$ be as in Proposition~\ref{technical lemma1:THM1}. Suppose that for each $t_0\in\mathbb R$, $\gamma: [t_{0},t_{\infty})\rightarrow \triangle^{*}_{\epsilon_{0}}$, where $t_{\infty}$ satisfies either $t_\infty\in\mathbb R$ or $t_\infty=+\infty$, is a solution of the initial-value problem
\begin{equation}\label{crucial o.d.e. in technical lemma2}
\dfrac{d\gamma}{dt}(t)=b\gamma^{\ell}(t)(1+g_{0}(\gamma(t))),\quad\gamma(t_0)=z_0,
\end{equation}where $z_0\in\triangle^{*}_{\epsilon_0}$ with $P(z_0)\neq0$, such that $\lim_{t\uparrow t_{\infty}}\gamma(t)=0$. Then $P(\gamma(t))\neq0$ for all $t\in(t_0,t_\infty)$.
\end{lemma}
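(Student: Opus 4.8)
The plan is to track the real quantity $u(t):=P(\gamma(t))$ along the given integral curve $\gamma$ and to show it satisfies a first-order ordinary differential equation for which $u\equiv0$ is a solution; uniqueness of solutions will then force $u$ to stay away from $0$ once it starts away from $0$. First I would record that $u(t_0)=P(z_0)\neq0$ by hypothesis, and that $u$ is a $C^\infty$ function of $t$ on $[t_0,t_\infty)$, since $\gamma$ takes values in $\triangle^{*}_{\epsilon_0}$, where $P$ is smooth.

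Next I would compute $\tfrac{du}{dt}$. Because $P$ is real-valued one has $P_{\bar z}=\overline{P_z}$, so the chain rule along the complex curve $\gamma$ gives
\[
\frac{du}{dt}(t)=P_z(\gamma)\,\gamma'+P_{\bar z}(\gamma)\,\overline{\gamma'}=2\,\Re\!\left[P_z(\gamma(t))\,\gamma'(t)\right].
\]
Substituting the defining equation \eqref{crucial o.d.e. in technical lemma2}, namely $\gamma'=b\gamma^{\ell}(1+g_0(\gamma))$, this becomes $2\,\Re[\,b\gamma^{\ell}(1+g_0(\gamma))P_z(\gamma)\,]$. The key step is to eliminate the $P_z$ term by means of condition $(\mathrm A2)$ of Proposition~\ref{technical lemma1:THM1}: that identity expresses $\Re[\,bz^{\ell}(1+g_0(z))P_z(z)\,]$ as $-\Re[(az^m+g_2(z))P^{n+1}(z)+g_1(z)P(z)]$. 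Since $P(\gamma)=u$ and $P^{n+1}(\gamma)=u^{n+1}$ are real, evaluating along $\gamma$ yields the closed equation
\[
\frac{du}{dt}=-2\,\Re\!\left[a\gamma^{m}(t)+g_2(\gamma(t))\right]u^{n+1}-2\,\Re\!\left[g_1(\gamma(t))\right]u=:A(t)\,u^{n+1}+B(t)\,u.
\]

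I would then observe that the coefficients $A,B$ are continuous (indeed $C^\infty$) on $[t_0,t_\infty)$, so the right-hand side $F(t,u)=A(t)u^{n+1}+B(t)u$ is continuous in $t$ and locally Lipschitz in $u$, and that $F(t,0)\equiv0$, whence $u\equiv0$ solves the equation. If $u(t_1)=0$ for some $t_1\in(t_0,t_\infty)$, then $u$ and the zero function are two solutions of the same initial-value problem at $t_1$, so Picard's uniqueness theorem forces $u\equiv0$ on a neighborhood of $t_1$ in $[t_0,t_\infty)$. The zero set of $u$ is thus both open and closed in the connected interval $[t_0,t_\infty)$ and nonempty, so $u$ would vanish identically, contradicting $u(t_0)\neq0$. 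Hence $P(\gamma(t))\neq0$ for all $t\in(t_0,t_\infty)$.

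The computation is essentially routine; the step requiring the most care is the passage from the two-real-variable identity $(\mathrm A2)$, valid identically in $z$, to a genuine one-variable ODE along $\gamma$, making sure that the reality of $P$ is used correctly so that the $u^{n+1}$ and $u$ factors emerge as honest functions of $u$ alone with smooth coefficients $A(t),B(t)$. Once this closed form is in hand, the conclusion follows directly from uniqueness and needs no further input from the geometry of the zero set of $P$.
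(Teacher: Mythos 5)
Your proof is correct, and it takes a genuinely different route from the paper's. The paper argues by contradiction: using continuity it extracts a first zero $t_1\in(t_0,t_\infty)$ of $P\circ\gamma$, sets $u(t)=\frac{1}{2}\log|P(\gamma(t))|$ on $(t_0,t_1)$, and uses the same chain-rule-plus-$(\mathrm A2)$ computation that you perform to show $u'(t)=-P^{n}(\gamma(t))\left(\Re\left(a\gamma^{m}(t)+o({|\gamma(t)|}^{m})\right)\right)+O({|\gamma(t)|}^{\ell})$ is bounded on $(t_0,t_1)$; integrating then bounds $u$, contradicting $u(t)\rightarrow-\infty$ as $t\uparrow t_1$. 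You instead keep $u(t)=P(\gamma(t))$ itself, obtain the closed polynomial ODE $u'=A(t)u^{n+1}+B(t)u$ with continuous coefficients, and invoke Picard uniqueness at a putative zero together with an open-closed argument on the connected interval. The two arguments rest on the identical key computation (reality of $P$ giving $u'=2\,\Re[P_z(\gamma)\gamma']$, then elimination of the $P_z$ term via $(\mathrm A2)$), so they are two phrasings of one mechanism: a bounded logarithmic derivative is precisely what Lipschitz uniqueness at $u=0$ encodes. Your version is arguably cleaner and slightly more general: it never uses the hypothesis that the connected component of $z=0$ in the zero set of $P$ is $\{0\}$ (which the paper invokes in its first-zero reduction), nor the condition $\lim_{t\uparrow t_\infty}\gamma(t)=0$, and it avoids logarithms and boundedness estimates entirely. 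What the paper's formulation buys is reusability: the quantitative estimate on $u'=\frac{d}{dt}\frac{1}{2}\log|P(\gamma(t))|$ is exactly the form exploited again later (e.g., in Subcase~2.2 of the proof of Theorem~\ref{main THM1}, where $u'(t)=\delta+\epsilon(\gamma(t))$ is integrated), so the log-derivative bookkeeping integrates directly into the rest of the paper's machinery.
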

\begin{proof}
Aiming for a contradiction, we suppose that $P$ has a zero on the curve $\gamma$. Then since the connected component of $z=0$ in the zero set of $P$ is $\{0\}$, without loss of generality, we may further assume that there exists a $t_1\in(t_0,t_\infty)$ such that $P(\gamma(t))\neq0$ for all $t\in(t_0,t_1)$ and $P(\gamma(t_1))=0$. 

Let $u(t):=\frac{1}{2}\log|P(\gamma(t))|$ for $t_0<t<t_1$. Then it follows from \eqref{crucial o.d.e. in technical lemma2} and $(\mathrm A2)$ that
\[
u'(t)=-P^{n}(t)\left(\Re(a\gamma^{m}(t)+o({|\gamma(t)|}^m))\right)+O({|\gamma(t)|}^\ell)
\]for all $t_0<t<t_1$. Combining this with the assumption for the vanishing order of $P$ at $z=0$, one can deduce that $u'(t)$ is bounded on $(t_0,t_1)$. This after applying the fundamental theorem of ordinary differential equations in turn yields the boundedness of $u(t)$ on $(t_0,t_1)$, which is absurd since $u(t)\rightarrow-\infty$ as $t\uparrow t_1$. Hence our proof is complete.
\end{proof}

\medskip

Before going further, we shall fix the notations. In what follows, we denote by $\mathbb N^{0}$ and $\mathbb N^{*}$ the set of all non-negative integers and the set of all positive integers, respectively.

\subsection{Proof of Theorem~\ref{main THM1}}

The $\mathrm{CR}$ hypersurface germ $(M_P,0)$ at the origin in $\mathbb C^2$ under consideration is defined by the equation
\[
\rho(z)=\rho(z_1,z_2):=\Re~z_1+(\Im~z_1)P(z_2)=0,
\]where $P$ is a $C^\infty$-smooth function satisfying the two above conditions $(i)$ and $(ii)$. Then we consider a holomorphic vector field $H=h_{1}(z_1,z_2)\partial z_1+h_{2}(z_1,z_2)\partial z_2$ defined near the origin in $\mathbb C^2$. We focus only on $H$ which is tangent to $M_P$. This means that $H$ satisfies the identity 
\begin{equation}\label{tangency condition in main THM1}
(\Re~H)\rho(z)=0,\quad\forall z\in M_{P}.
\end{equation}Expanding $h_1$ and $h_2$ into the Taylor series at the origin, we get
\begin{align*}
h_{1}(z_1,z_2)&=\sum_{j,k=0}^{\infty}a_{j,k}z^{j}_{1}z^{k}_{2}=\sum_{j=0}^{\infty}a_{j}(z_2)z^{j}_{1};\\
h_{2}(z_1,z_2)&=\sum_{j,k=0}^{\infty}b_{j,k}z^{j}_{1}z^{k}_{2}=\sum_{j=0}^{\infty}b_{j}(z_2)z^{j}_{1},
\end{align*}where $a_{j,k}, b_{j,k}\in\mathbb C$ and $a_j, b_j$ are holomorphic functions for all $j\in\mathbb N^{0}$. Moreover, we further assume that $H(0,0)=0$. Then it follows that
\[
a_{0,0}=b_{0,0}=0
\]since $h_{1}(0,0)=h_{2}(0,0)=0$. A direct computation shows that 
\[
\rho_{z_1}(z_1,z_2)=\frac{1}{2}+\frac{1}{2i}P(z_2);\quad\rho_{z_2}(z_1,z_2)=(\Im~z_1)P_{z_2}(z_2),
\]and hence \eqref{tangency condition in main THM1} can be re-written as
\begin{equation*}\label{re-written1 tangency condition in main THM1}
\Re\left[\left(\frac{1}{2}+\frac{1}{2i}P(z_2)\right)h_{1}(z_1,z_2)+(\Im~z_1)P_{z_2}(z_2)h_{2}(z_1,z_2)\right]=0
\end{equation*}for all $(z_1,z_2)\in M_{P}$.

Since $(it-tP(z_2),z_2)\in M_{P}$ with $t\in\mathbb R$ small enough, the previous equation again admits a new form
\begin{equation}\label{re-written2 tangency condition in main THM1}
\Re\left[\left(\frac{1}{2}+\frac{1}{2i}P(z_2)\right)\sum_{j,k=0}^{\infty}a_{j,k}{(it-tP(z_2))}^jz^{k}_{2}+tP_{z_2}(z_2)\sum_{m,n=0}^{\infty}b_{m,n}{(it-tP(z_2))}^mz^{n}_{2}\right]=0
\end{equation}for all $z_2\in\mathbb C$ and $t\in\mathbb R$ with $z_2\in\triangle_{\epsilon_0}$ and $|t|<\delta_0$, where $\epsilon_0, \delta_0>0$ are small enough.

Inserting $t=0$ into \eqref{re-written2 tangency condition in main THM1}, we have
\begin{equation}\label{the first induced condition in main THM1}
\Re\left[\left(\frac{1}{2}+\frac{1}{2i}P(z_2)\right)\sum_{k=0}^{\infty}a_{0,k}z^{k}_{2}\right]\equiv0
\end{equation}on $\triangle_{\epsilon_0}$. Combining this with the assumption that $P$ vanishes to infinite order at $z_2=0$, one can assert that 
\begin{equation}\label{the first condition on a in main THM1}
a_{0,k}=0,\quad\forall k\in\mathbb N^{*}.
\end{equation}Moreover, setting the coefficient of $t^{m+1}$ in \eqref{re-written2 tangency condition in main THM1} equals zero for each $m\in\mathbb N^{0}$, we obtain
\begin{equation}\label{the 2nd condition induced in main THM1}
\Re\left[\left(\frac{1+P^{2}(z_2)}{2}\right)\sum_{k=0}^{\infty}i a_{m+1,k}{(i-P(z_2))}^{m}z^{k}_{2}+P_{z_2}(z_2)\sum_{n=0}^{\infty}b_{m,n}{(i-P(z_2))}^{m}z^{n}_{2}\right]\equiv0
\end{equation}for each $m\in\mathbb N^{0}$ on $\triangle_{\epsilon_0}$. Since both $P(z_2)$ and $P_{z_2}(z_2)$ vanish to infinite order at $z_2=0$, \eqref{the 2nd condition induced in main THM1} yields
\begin{equation}\label{the 3rd condition induced in main THM1}
\Re\left[\sum_{k=0}^{\infty}i^{m+1}a_{m+1,k}z^{k}_{2}\right]\equiv0
\end{equation}for each $m\in\mathbb N^{0}$ on $\triangle_{\epsilon_0}$. Then it follows from \eqref{the first condition on a in main THM1} and \eqref{the 3rd condition induced in main THM1} that
\begin{equation}\label{conditions on a in main THM1}
a_{j,k+1}=0,\;\forall j,k\in\mathbb N^{0};\;\Re(i^{\ell} a_{\ell,0})=0,\;\forall\ell\in\mathbb N^{*}.
\end{equation}Considering again the assumption for the vanishing order of $P(z_2)$ at $z_2=0$, we indeed have 
\[
h_{1}(z_1,z_2)=\alpha z_1
\]for some $\alpha\in\mathbb R$, if $h_{2}(z_1,z_2)\equiv0$. Therefore, in the remaining of the proof, we always assume that $h_2\notequiv0$ without loss of generality.

Let $m_0$ be the smallest integer such that $b_{m_0,n}\neq0$ for some $n\in\mathbb N^{0}$. Then we let $n_0$ be the smallest integer such that $b_{m_0,n_0}\neq0$. Since $b_{0,0}=0$, it is clear that $m_0\geq1$ if $n_0=0$. With this setting, \eqref{the 2nd condition induced in main THM1}  and \eqref{conditions on a in main THM1} yield
\begin{equation}\label{the 4th condition induced in main THM1}
\Re\left[\left(\frac{1+P^{2}(z_2)}{2}\right)ia_{m_0+1,0}{(i-P(z_2))}^{m_0}+P_{z_2}(z_2)\sum_{n=n_0}^{\infty}b_{m_0,n}{(i-P(z_2))}^{m_0}z^{n}_{2}\right]\equiv0
\end{equation}on $\triangle_{\epsilon_0}$. Since $P(z_2)=o({|z_2|}^j)$ for any $j\in\mathbb N^{*}$, it follows from \eqref{the 4th condition induced in main THM1} that
\[
\Re\left[\left(\frac{1+P^{2}(z_2)}{2}\right)ia_{m_0+1,0}{(i-P(z_2))}^{m_0}+i^{m_0}b_{m_0,n_0}(z^{n_0}_{2}+o(z^{n_0}_{2}))P_{z_2}(z_2)\right]\equiv0
\]on $\triangle_{\epsilon_0}$.

Now we shall consider the following two cases.

\medskip

\noindent{\bf Case~1.} $m_0=0$. In this case, by \cite[Corollary~$4$]{HN2016}, we first obtain $n_0=1$ and $b_{0,1}=i\beta$ for some $\beta\in\mathbb R^{*}$. Then, by a change of variables~(cf.~\cite[Lemma~$1$]{N2013}), we may assume that
\[
b_{0}(z_2)=\sum_{n=0}^{\infty}b_{0,n}z^{n}_{2}=i\beta z_2.
\]Therefore, we get from \eqref{the 4th condition induced in main THM1} that
\begin{equation}\label{the 5th condition induced in main THM1}
\Re\left[i\beta z_2 P_{z_2}(z_2)\right]\equiv0
\end{equation}on $\triangle_{\epsilon_0}$. This implies that $P(z_2)\equiv P(|z_2|)$ on $\triangle_{\epsilon_0}$.

We now prove that $b_m=0$ for every $m\in\mathbb N^{*}$. Suppose otherwise. Then there exists the smallest number $m_1\in\mathbb N^{*}$ such that $b_{m_1}\notequiv0$. By the same argument as above, we may assume that $b_{m_1}(z_2)\equiv i^{1-m_1}\beta_1 z_2+o(|z_2|)$ for some $\beta_1\in\mathbb R^{*}$ on $\triangle_{\epsilon_0}$. Moreover, we indeed have $b_{m_1}(z_2)=i^{1-m_1}\beta_1 z_2$ for some $\beta_1\in\mathbb R^{*}$: suppose otherwise. Then there exist $k_0\geq2$ and $c_{k_0}\in\mathbb C^{*}$ such that
\[
b_{m_1}(z_2)=i^{1-m_1}\beta_1 z_2+c_{k_0} z^{k_0}_{2}+o({|z_2|}^{k_0}).
\]Putting $m=m_1$ in \eqref{the 2nd condition induced in main THM1} and then subtracting the associated modification of \eqref{the 2nd condition induced in main THM1} from the equation 
\begin{equation}\label{a modification of the 4th condition induced in main THM1}
\Re\left[i\beta_1 z_2 P_{z_2}(z_2)\right]\equiv0
\end{equation}on $\triangle_{\epsilon_0}$ induced by \eqref{the 5th condition induced in main THM1}, we obtain 
\[
\Re\left[\left(\frac{1+P^{2}(z_2)}{2}\right)ia_{m_1+1,0}{(i-P(z_2))}^{m_1}+i^{m_1}c_{k_0}(z^{k_0}_{2}+o({|z_2|}^{k_0}))P_{z_2}(z_2)\right]\equiv0
\]on $\triangle_{\epsilon_0}$, which contradicts to Proposition~\ref{technical lemma1:THM1}. Hence we have $b_{m_1}(z_2)=i^{1-m_1}\beta_1 z_2$ for some $\beta_1\in\mathbb R^{*}$. Substituting this into \eqref{the 2nd condition induced in main THM1}, one gets
\begin{equation}\label{an induced condition by the 2nd condition induced in main THM1}
\Re\left[\left(\frac{1+P^{2}(z_2)}{2}\right)ia_{m_1+1,0}{(i-P(z_2))}^{m_1}+i^{1-m_1}\beta_1 z_2 P_{z_2}(z_2){(i-P(z_2))}^{m_1}\right]\equiv0
\end{equation}on $\triangle_{\epsilon_0}$. Subtracting \eqref{an induced condition by the 2nd condition induced in main THM1} from \eqref{a modification of the 4th condition induced in main THM1}, we have
\[
\Re\left[\left(\frac{1+P^{2}(z_2)}{2}\right)ia_{m_1+1,0}{(i-P(z_2))}^{m_1}-m_1\beta_1(z_2+o(|z_2|))P_{z_2}(z_2)P(z_2)\right]\equiv0
\]on $\triangle_{\epsilon_0}$, which again contradicts to Proposition~\ref{technical lemma1:THM1}.

Altogether, in this case, we obtain $h_{2}(z_1,z_2)\equiv i\beta z_2$ and $P(z_2)\equiv P(|z_2|)$ for some $\beta\in\mathbb R^{*}$ on $\triangle_{\epsilon_0}$.

\medskip

\noindent{\bf Case~2.} $m_0\geq1$. In this case, by Proposition~\ref{technical lemma1:THM1}, we first obtain $n_0=1$ and $b_{m_0,1}=i^{1-m_0}\beta z_2$ for some $\beta\in\mathbb R^{*}$. Then, by a change of variables, we may assume that
\[
b_{m_0}(z_2)=\sum_{n=0}^{\infty}b_{m_0,n}z^{n}_{2}=i^{1-m_0}\beta z_2.
\]Therefore, in this case, \eqref{the 4th condition induced in main THM1} can be re-written as
\begin{equation}\label{a modification of the 4th condition in main THM1 in Case2}
\Re\left[\left(\frac{1+P^{2}(z_2)}{2}\right)ia_{m_0+1,0}{(i-P(z_2))}^{m_0}+i^{1-m_0}\beta z_2P_{z_2}(z_2){(i-P(z_2))}^{m_0}\right]\equiv0
\end{equation}on $\triangle_{\epsilon_0}$.

We now divide the argument into two subcases as follows.

\medskip

\noindent{\bf Subcase~2.1.} $a_{m_0+1,0}=0$. In this subcase, it follows from \eqref{a modification of the 4th condition in main THM1 in Case2} that
\begin{equation}\label{R1-Subcase2.1 in main THM1}
\Re\left[i^{1-m_0}\beta z_2 P_{z_2}(z_2){(i-P(z_2))}^{m_0}\right]\equiv0
\end{equation}on $\triangle_{\epsilon_0}$.

Let $r\in(0,\epsilon_0)$ such that $P(r)\neq0$. Then we let $\gamma: [t_0,+\infty)\rightarrow\mathbb C$ be a curve such that $\gamma'(t)=i^{1-m_0}\beta\gamma(t){(i-P(\gamma(t)))}^{m_0}$ and $\gamma(t_0)=r$. Then setting $u(t)=P(\gamma(t))$, \eqref{R1-Subcase2.1 in main THM1} shows that $u'(t)\equiv0$, and hence $u(t)\equiv P(r)$. Therefore, we have
\[
\gamma'(t)=a\gamma(t);\;\gamma(t_0)=r,
\]where $a:=i^{1-m_0}\beta{(i-P(r))}^{m_0}$. This yields $\gamma(t)=r\exp(a(t-t_0))$. Since $|\gamma(t)|=r\exp((\Re\;a)(t-t_0))$ and $\gamma(t_0)=r\neq0$, we momentarily assume that $\Re\;a<0$. Since $0\leq|\gamma(t)|=r\exp((\Re\;a)(t-t_0))$, we get $\gamma(t)\rightarrow0$ as $t\rightarrow\infty$, and hence $P(r)\equiv P(\gamma(t))\rightarrow P(0)=0$, which contradicts to our choice of $r\in(0,\epsilon_0)$. In the case when $\Re\;a>0$, one can proceed the same argument as above~(by considering a curve $\tilde\gamma:(-\infty,t_0]\rightarrow\mathbb C$ instead of the above curve $\gamma$). 

\medskip

\noindent{\bf Subcase~2.2.} $a_{m_0+1,0}\neq0$. In this subcase, it follows from \eqref{a modification of the 4th condition in main THM1 in Case2} that on $\triangle_{\epsilon_0}$
\begin{equation}\label{R2-Subcase2.2 in main THM1}
\Re\left[i^{1-m_0}\beta z_2 P_{z_2}(z_2){(i-P(z_2))}^{m_0}\right]\equiv(\delta+\epsilon(z_2))P(z_2),
\end{equation}where $\delta:=\Re(m_0i^{m_0}a_{m_0+1,0}/2)\in\mathbb R^{*}$ and $\epsilon: \triangle_{\epsilon_0}\rightarrow\mathbb R$ is a smooth function with the condition that $\epsilon(z_2)\rightarrow0$ as $z_2\rightarrow0$. Without loss of generality, we may assume that $\delta<0$ and $|\epsilon(z_2)|<|\delta|/2$ on $\triangle_{\epsilon_0}$.

Let $r\in(0,\epsilon_0)$ such that $P(r)\neq0$. Then we let $\gamma: [t_0,+\infty)\rightarrow\mathbb C$ such that $\gamma'(t)=i^{1-m_0}\beta\gamma(t){(i-P(\gamma(t)))}^{m_0}$ and $\gamma(t_0)=r$. Then setting $u(t)=\frac{1}{2}\log|P(\gamma(t))|$, \eqref{R2-Subcase2.2 in main THM1} shows that $u'(t)=\delta+\epsilon(\gamma(t))$. Hence, we get
\begin{equation}\label{R3-Subcase2.2 in main THM1}
u(t)-u(t_0)=\delta(t-t_0)+\int_{t_0}^{t}\epsilon(\gamma(\tau))d\tau,\;\forall t\geq t_0.
\end{equation}This implies that $u(t)\rightarrow-\infty$ as $t\rightarrow+\infty$, and hence $\gamma(t)\rightarrow0$ as $t\rightarrow+\infty$. Moreover, since $|\epsilon(z_2)|<|\delta|/2$ on $\triangle_{\epsilon_0}$, it follows from \eqref{R3-Subcase2.2 in main THM1} that
\[
u(t)<u(t_0)+\frac{\delta}{2}(t-t_0),\;\forall t>t_0.
\]This inequality yields
\[
|P(\gamma(t))|\lesssim \exp(\delta t),\;\forall t>t_0. 
\]Therefore, $\gamma(t)$ satisfies the following:
\[
\gamma'(t)=\gamma(t)(i\beta+g(t)),
\]where $g: (t_0,+\infty)\rightarrow\mathbb C$ is a smooth function satisfying that $|g(t)|\lesssim\exp(\delta t)$. Then this yields
\[
\gamma(t)=r\exp\left(i\beta (t-t_0)+\int_{t_0}^{t}O(\exp(\delta \tau))d\tau\right);
\]hence $\gamma(t)\nrightarrow0$ as $t\rightarrow+\infty$, which contradicts to the discussion right after \eqref{R3-Subcase2.2 in main THM1}.

Hence, all the possible cases for the choice of $h_{2}$ are considered.

\medskip

Now we shall show that $h_{1}$ has the form of 
\[
h_{1}(z_1,z_2)=\alpha z_1,\;\alpha\in\mathbb R,
\]if $P(z_2)\equiv P(|z_2|)$ and $h_{2}(z_1,z_2)=i\beta z_2$ for some $\beta\in\mathbb R^{*}$. Suppose otherwise. Then there exists $j_0\geq2$ such that $h_{1}(z_1,z_2)=\alpha z_1+a_{j_0,0}z^{j_0}_{1}+o({|z_1|}^{j_0})$ with $a_{j_{0},0}\neq0$. Combining \eqref{re-written2 tangency condition in main THM1} with \eqref{conditions on a in main THM1}, $a_{0,0}=0$, and $P(z_2)\equiv P(|z_2|)$, we have
\begin{equation}\label{Discriminant-1 in main THM1}
\begin{split}
&\Re\left[\left(\frac{1}{2}+\frac{1}{2i}P(z_2)\right)\sum_{j=1}^{\infty}a_{j,0}{(it-tP(z_2))}^{j}+it\beta z_2 P_{z_2}(z_2)\right]\\
&=\Re\left[\left(\frac{1}{2}+\frac{1}{2i}P(z_2)\right)\sum_{j=1}^{\infty}a_{j,0}{(it-tP(z_2))}^{j}\right]\\
&=0
\end{split}
\end{equation}for all $z_2\in\mathbb C$ and $t\in\mathbb R$ with $z_2\in\triangle_{\epsilon_0}$ and $|t|<\delta_0$, where $\epsilon_0,\delta_0>0$ are small enough. Considering the coefficient of $t^{j}$ in \eqref{Discriminant-1 in main THM1}, for each $j\in\mathbb N^{*}$, we get 
\begin{equation}\label{Discriminant-2 in main THM1}
\Re\left[\left(\frac{1}{2}+\frac{1}{2i}P(z_2)\right)a_{j,0}{(i-P(z_2))}^{j}\right]\equiv0
\end{equation}on $\triangle_{\epsilon_0}$. Then one may regard \eqref{Discriminant-2 in main THM1} as an equation with a variable $P(z_2)$. For this reason, considering the coefficient of degree $1$ with respect to the variable $P(z_2)$ in \eqref{Discriminant-2 in main THM1}, we have
\begin{equation*}
\Re\left[-\frac{1}{2}(j-1){i}^{j-1}a_{j,0}\right]=0
\end{equation*}for each $j\in\mathbb N^{*}$. This conjunction with \eqref{conditions on a in main THM1} yields
\begin{equation*}
\Re\left(ia_{1,0}\right)=0;\;a_{s,0}=0,\;\forall s\geq2.
\end{equation*}Moreover, we note that if $P$ is rotationally symmetric, then
\[
H:=\alpha z_1\frac{\partial}{\partial z_1}+i\beta z_2\frac{\partial}{\partial z_2},
\]where $\alpha\in\mathbb R$ and $\beta\in\mathbb R^{*}$, always satisfies the condition \eqref{tangency condition in main THM1}. Hence we complete the proof.

\section{Proofs of Theorems~\ref{main THM2} and \ref{main THM3}}\label{section for the proofs of main THM2 and main THM3}
In this section, we continue the study of a $1$-nonminimal infinite type model $(M_P,0)$. As mentioned above, Theorems~\ref{main THM2} and \ref{main THM3} present the investigation of the associated holomorphic vector fields under certain conditions of local defining functions. For the proofs of these main theorems, we now prepare the following two technical lemmas. For the sake of brevity, we omit the proofs (see \cite{HN2016} for the details of the proofs).

\begin{lemma}[{\cite[Lemma~$1$]{HN2016}}]\label{modulus g' lemma}
Let $P:\triangle_{\epsilon_0}\rightarrow\mathbb R$ be a $C^{\infty}$-smooth function satisfying $\nu_{0}(P)=+\infty$ and $P(z)\nequiv0$. Suppose that there exists a conformal map $g$ on $\triangle_{\epsilon_0}$ with $g(0)=0$ such that 
\[
P(g(z))=(\beta+o(1))P(z),\quad z\in\triangle_{\epsilon_0},
\]for some $\beta\in\mathbb R^{*}$. Then $|g'(0)|=1$.
\end{lemma}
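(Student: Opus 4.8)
The plan is to argue by contradiction on $a:=|g'(0)|$, exploiting the iteration of $g$ together with the infinite-order vanishing of $P$ at the origin. First I would reduce to the case $a<1$. Since $g$ is conformal with $g(0)=0$, so is $g^{-1}$, and substituting $z=g^{-1}(\zeta)$ into the hypothesis gives $P(g^{-1}(\zeta))=(1/\beta+o(1))P(\zeta)$ as $\zeta\to0$ (here $\beta\ne0$ is used to invert $\beta+o(1)$), an equation of exactly the same shape with $\beta$ replaced by the nonzero number $1/\beta$ and $|g'(0)|$ replaced by $1/a$. Thus if $a>1$ one applies the argument below to $g^{-1}$, so it suffices to rule out $a<1$.

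Assuming $a<1$, I would set up the iteration. Choosing $r\in(0,\epsilon_0)$ small enough that $|g(z)|\le\frac{1+a}{2}|z|$ on $\triangle_r$, the map $g$ contracts $\triangle_r$ into itself, so for any $z_0\in\triangle^{*}_{r}$ the orbit $w_n:=g^n(z_0)$ is well defined, stays in $\triangle_r$, and tends to $0$; moreover $|w_{n+1}|/|w_n|=|g(w_n)|/|w_n|\to a$, whence $|w_n|^{1/n}\to a$. Since $P\not\equiv0$ near $0$ I can pick $z_0$ with $P(z_0)\ne0$, and then the functional equation gives $P(w_{n+1})=(\beta+o(1))P(w_n)$ with $\beta+o(1)\ne0$ for large $n$; by induction $P(w_n)\ne0$ for every $n$, and the ratios $|P(w_{n+1})|/|P(w_n)|\to|\beta|$ yield $|P(w_n)|^{1/n}\to|\beta|$.

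Finally I would play these two growth rates against the infinite-order vanishing. For each positive integer $N$ the hypothesis $\nu_0(P)=+\infty$ provides a constant $C_N$ with $|P(w)|\le C_N|w|^N$ for $w$ near $0$, so for large $n$ one has $|P(w_n)|^{1/n}\le C_N^{1/n}|w_n|^{N/n}\to a^N$. Combining with $|P(w_n)|^{1/n}\to|\beta|$ gives $|\beta|\le a^N$ for every $N$; letting $N\to\infty$ and using $a<1$ forces $|\beta|=0$, contradicting $\beta\in\mathbb R^{*}$. This excludes $a<1$, and by the reduction in the first paragraph it excludes $a>1$ as well, so $|g'(0)|=a=1$.

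I expect the main obstacle to be the bookkeeping of the asymptotics rather than any deep idea: one must check carefully that the error term in $P(g(z))=(\beta+o(1))P(z)$ is genuinely controlled along the entire orbit $w_n\to0$, so that the ratio limit $|P(w_{n+1})|/|P(w_n)|\to|\beta|$ and hence $|P(w_n)|^{1/n}\to|\beta|$ are legitimate, and that the iterates never leave the disk on which the functional equation holds. The inverse-map reduction to $a<1$ must likewise be carried out on a small enough disk where $g^{-1}$ is defined and the rewritten equation remains valid.
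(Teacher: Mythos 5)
Your argument is correct and is essentially the same as the proof of Lemma~$1$ in \cite{HN2016} (which this paper cites rather than reproduces): iterate $g$ from a point $z_0$ with $P(z_0)\neq 0$, compare the geometric decay $|g^{n}(z_0)|\lesssim c^{n}$ with $c<1$ against the rate $|P(g^{n}(z_0))|\approx|\beta|^{n}$ coming from the functional equation, and contradict the infinite-order vanishing of $P$, with the case $|g'(0)|>1$ reduced to $|g'(0)|<1$ via $g^{-1}$. One small repair to your induction: shrink $r$ so that the error term satisfies $|\epsilon(z)|<|\beta|/2$ on all of $\triangle_{r}$ \emph{before} choosing $z_0\in\triangle^{*}_{r}$ with $P(z_0)\neq 0$ (possible since $P$ is a non-zero germ at $0$), so that $\beta+\epsilon(w_n)\neq 0$ at every step of the orbit and not merely for large $n$.
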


\begin{lemma}[{\cite[Lemma~$3$]{HN2016}}]
Let $P$ be a non-zero $C^\infty$-smooth function with $P(0)=0$ and let $g$ be a conformal map satisfying $g(0)=0, |g'(0)|=1$, and $g\neq\mathrm{id}$. If there exists a real number $\delta\in\mathbb R^{*}$ such that $P(g(z))\equiv\delta P(z)$, then $\delta=1$. Moreover, we have either $g'(0)=\exp(2\pi i p/q)\;(p,q\in\mathbb Z)$ and $g^{q}=\mathrm{id}$ or $g'(0)=\exp(2\pi i\theta)$ for some $\theta\in\mathbb R\backslash\mathbb Q$.
\end{lemma}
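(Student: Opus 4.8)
The plan is to combine iteration of the relation $P\circ g=\delta P$ with the local dynamics of $g$ at its neutral fixed point $0$. Put $\lambda:=g'(0)$, so that $|\lambda|=1$, and iterate to obtain $P\circ g^{n}=\delta^{n}P$ on the set where the forward orbit of $z$ stays inside $\triangle_{\epsilon_0}$. Replacing $g$ by $g^{-1}$ sends $(\delta,\lambda)\mapsto(\delta^{-1},\lambda^{-1})$ and preserves every hypothesis, so I may assume $|\delta|\ge1$. I will also use that $P$ keeps a definite sign on $\triangle_{\epsilon_0}^{*}$ --- the convention of \cite{KN2015} recalled above --- since otherwise the statement fails: for $g(z)=-z$ and any odd flat $P$ one has $P\circ g=-P$, i.e.\ $\delta=-1$. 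Granting such a sign normalization, the identity $\delta P(z)=P(g(z))$ already forces $\delta>0$, and it remains to prove $\delta=1$ while showing that, among the three a priori possibilities for $\lambda$ (a root of unity with $g$ of finite order; a root of unity with $g$ parabolic; an irrational rotation number), the parabolic one cannot occur.

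The clean core is the linearizable situation: $\lambda=\exp(2\pi ip/q)$ with $g^{q}=\mathrm{id}$, or $\lambda=\exp(2\pi i\theta)$ irrational with $g$ Siegel-linearizable. In either case a conformal germ $\psi$ with $\psi(0)=0$ conjugates $g$ to the rotation $R_\lambda\colon w\mapsto\lambda w$; for finite order one may take the explicit average $\psi=\frac1q\sum_{k=0}^{q-1}\lambda^{-k}g^{k}$, which satisfies $\psi\circ g=R_\lambda\circ\psi$ and $\psi'(0)=1$. Writing $Q:=P\circ\psi^{-1}$, the relation becomes $Q\circ R_\lambda=\delta Q$, and since $R_\lambda$ preserves each disk $\triangle_r$ the rotation-invariant quantity $\max_{|w|\le r}|Q(w)|$ forces $|\delta|=1$ at once; with $\delta>0$ this gives $\delta=1$, and in the finite-order case $Q=Q\circ R_\lambda^{q}=\delta^{q}Q$ reconfirms $\delta^{q}=1$. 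When $\lambda$ is irrational but non-linearizable (Cremer) the conclusion still permits it; there I invoke the existence of arbitrarily small $g$-invariant compacta accumulating at $0$ (P\'erez--Marco hedgehogs), on which the orbit remains inside $\triangle_{\epsilon_0}$, so that $\delta^{n}P$ stays bounded and again $|\delta|=1$, whence $\delta=1$.

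The decisive and hardest step is to rule out the parabolic case, in which $\lambda$ is a primitive $q$-th root of unity but $h:=g^{q}\neq\mathrm{id}$; this is exactly what upgrades the first alternative to $g^{q}=\mathrm{id}$. Now $h$ is a nontrivial parabolic germ with $h'(0)=1$, and I appeal to the Leau--Fatou flower theorem: $h$ has attracting and repelling petals whose union is a punctured neighborhood of $0$, with $h^{n}\to0$ on the attracting petals and $h^{-n}\to0$ on the repelling ones. Set $\eta:=\delta^{q}>0$. Choosing $z$ in an attracting petal with $P(z)\neq0$, from $P(h^{n}(z))=\eta^{n}P(z)$ and $P(h^{n}(z))\to P(0)=0$ we get $\eta<1$; applying the same reasoning to $h^{-1}$ on a repelling petal gives $\eta>1$, a contradiction. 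Hence the parabolic case is impossible, $P\not\equiv0$ is preserved, and the asserted dichotomy follows. I expect this petal analysis to be the main obstacle: converting the functional equation into a contradiction requires points with $P\neq0$ on both attracting and repelling petals, and since $P$ is merely smooth --- so that the open overlaps of adjacent petals carry no analytic rigidity --- it is precisely the sign (or non-vanishing) behaviour of $P$ near $0$, rather than smoothness alone, that makes the contradiction go through.
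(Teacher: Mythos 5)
Your opening observation is correct and important: as printed, the lemma is in fact false, since $g(z)=-z$ together with any odd flat $P$ (for instance $P(z)=\Re (z)\exp(-1/{|z|}^{2})$) satisfies every hypothesis with $\delta=-1$. The paper omits the proof entirely, deferring to \cite{HN2016}, and in all of its applications (Theorems~\ref{main THM2} and \ref{main THM3}) the relation arrives with $\delta=1$ already established, so only the dichotomy on $g'(0)$ is used; your sign normalization forcing $\delta>0$ is therefore a defensible reading of the intended statement. Your linearizable cases are also sound: the averaged conjugacy $\psi=\frac{1}{q}\sum_{k=0}^{q-1}\lambda^{-k}g^{k}$ when $g^{q}=\mathrm{id}$, and a Siegel linearization otherwise, turn the relation into $Q\circ R_{\lambda}=\delta Q$, and the rotation invariance of $\max_{|w|=r}|Q(w)|$ correctly yields $|\delta|=1$, hence $\delta=1$ once $\delta>0$.

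The genuine gap sits exactly where you predicted, in the parabolic exclusion, and your contradiction does not close as written. From $P\not\equiv0$ you obtain \emph{one} point $z_0$ with $P(z_0)\neq0$ lying in \emph{some} petal of $h=g^{q}$, not points with $P\neq0$ in both an attracting and a repelling petal. Say $z_0$ lies in an attracting petal; then $P(h^{n}(z_0))=\eta^{n}P(z_0)\rightarrow0$ gives $|\eta|<1$, and the backward-orbit argument then shows $P\equiv0$ on every repelling petal --- which is consistent, not contradictory. Sign-definiteness does not rescue this: a nonnegative smooth function vanishing at $0$ may vanish on large open sets, and inside an attracting Fatou coordinate the equation $Q(w+1)=\eta Q(w)$ admits nonzero nonnegative solutions such as $Q(w)=\eta^{\Re w}G(w)$ with $G\geq0$ one-periodic and supported in $\{|\Im w|\leq M\}$, which vanish on the overlaps with the repelling petals; so no contradiction arises from the attracting petal alone. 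What must be supplied --- and is missing --- is the propagation of the vanishing through the gates between petals: one must show that the backward orbit of $z_0$ exits the attracting petal and, for petals chosen small relative to the disk on which $P\circ h=\eta P$ holds, enters a repelling petal while remaining in the domain; there $P=0$, whereas $P(h^{-n}(z_0))=\eta^{-n}P(z_0)\neq0$, the desired contradiction. Until that transit step is argued, the parabolic case is not excluded and the dichotomy is unproved. The same non-vanishing issue undermines your Cremer step: a P\'erez-Marco hedgehog $K$ has empty interior, so a smooth $P$ may vanish identically on $K$, making the boundedness of $\delta^{n}P$ along orbits in $K$ vacuous; you need an invariant set through a point where $P\neq0$, which the hedgehog theorem by itself does not provide.
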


\subsection{Proof of Theorem~\ref{main THM2}}

$(a)$ Let $H=h_{1}(z_1,z_2)\partial_{z_1}+h_{2}(z_1,z_2)\partial_{z_2}\in\mathfrak{aut}(M_P,0)$ be arbitrary. That is, $H$ is a holomorphic vector field near the origin in $\mathbb C^2$ such that 
\[(\Re\;H)\rho(z)=0
\]for all $z\in M_{P}$. We assume that $\{\phi_{t}\}_{t\in\mathbb R}\subset\mathrm{Aut}(M_P,0)$ is the associated subgroup generated by $H$. Since $\phi_{t}$ is biholomorphic for every $t\in\mathbb R$, the set $\{\phi_{t}(0,0): t\in\mathbb R\}$ is contained in $P_{\infty}(M_P)$. Moreover, since the connected component of $(0,0)$ in $P_{\infty}(M_{P})$ is $\{(is,0): s\in\mathbb R\}$, one gets $\phi_{t}(0,0)\in\{(is,0): s\in\mathbb R\}$ for every $t\in\mathbb R$. This relation yields
\begin{equation}\label{R1 in main THM2}
\Re\;h_{1}(0,0)=h_{2}(0,0)=0.
\end{equation}Then we immediately prove the assertion~$(a)$, if $\Im\;h_{1}(0,0)=0$.

For this reason, we shall now consider the case when $\Im\;h_{1}(0,0)\neq0$. Expanding the functions $h_1$ and $h_2$ into the Taylor series at the origin, 
\begin{align*}
h_{1}(z_1,z_2)&=\sum_{j,k=0}^{\infty}a_{j,k}z^{j}_{1}z^{k}_{2};\\
h_{2}(z_1,z_2)&=\sum_{j,k=0}^{\infty}b_{j,k}z^{j}_{1}z^{k}_{2},
\end{align*}where $a_{j,k}, b_{j,k}\in\mathbb C$ as in the proof of Theorem~\ref{main THM1}. Since $P(z_2)$ vanishes to infinite order at $z_2=0$, Eq.~\eqref{the first induced condition in main THM1} in the proof of Theorem~\ref{main THM1} yields
\begin{equation}\label{R2 in main THM2}
\Re\;a_{0,0}=0;\;a_{0,\ell}=0,\;\forall\ell\in\mathbb N^{*}.
\end{equation}If $\Im\;a_{0,0}\neq0$, then \eqref{the first induced condition in main THM1} and \eqref{R2 in main THM2} imply that
\begin{equation*}
\begin{split}
\Re\left[\left(\frac{1}{2}+\frac{P(z_2)}{2i}\right)\sum_{k=0}^{\infty}a_{0,k}z^{k}_{2}\right]&=\Re\left[\left(\frac{1}{2}+\frac{P(z_2)}{2i}\right)i(\Im\;a_{0,0})\right]\\
&=\frac{(\Im\;a_{0,0})}{2}P(z_2)\\
&\equiv0
\end{split}
\end{equation*}on $\triangle_{\epsilon_0}$; hence, $P(z_2)\equiv0$ on $\triangle_{\epsilon_0}$ which contradicts to our assumption $(i)$. Combining this fact with \eqref{R1 in main THM2}, we obtain the vanishing property of $H$ at the origin in $\mathbb C^2$.

In addition, by the definition of $\mathfrak{aut}_{0}(M_P,0)$, it is clear that 
\[
\mathfrak{aut}_{0}(M_P,0)\subset\mathfrak{aut}(M_P,0).
\]This completes the proof of $(a)$. 

\medskip

\noindent$(b)$ We first assume that 
\[
\mathfrak{aut}_{0}(M_P,0)=\{\alpha z_1\partial_{z_1}: \alpha\in\mathbb R\}.
\]Then it follows from the assertion~$(a)$ that 
\[
\mathfrak{aut}(M_P,0)=\{\alpha z_1\partial_{z_1}:\alpha\in\mathbb R\}
\]also holds.

Now let us denote by $\{T_{t}\}_{t\in\mathbb R}$ the $1$-parameter subgroup generated by $z_1\partial z_1$, that is, 
\[
T_{t}(z_1,z_2)=(\exp(t)z_1,z_2),\;t\in\mathbb R. 
\]For any $f=(f_1,f_2)\in\mathrm{Aut}(M_P,0)$, we define a family $\{F_t\}_{t\in\mathbb R}$ of automorphisms by setting
\[
F_{t}:=f\circ T_{-t}\circ f^{-1}.
\]Then it follows that $\{F_{t}\}_{t\in\mathbb R}$ is a $1$-parameter subgroup of $\mathrm{Aut}(M_P)$. Moreover, since $\mathfrak{aut}(M_P,0)=\{\alpha z_1\partial_{z_1}: \alpha\in\mathbb R\}$, the holomorphic vector field $H$ generated by $\{F_{t}\}_{t\in\mathbb R}$ belongs to $\{\alpha z_1\partial_{z_1}:\alpha\in\mathbb R\}$. This means that there exists a real number $\delta$ such that
\[
H=\delta z_1\partial_{z_1},
\]which yields
\[
F_{t}(z_1,z_2)=(\exp(\delta t)z_1,z_2),\;t\in\mathbb R.
\]This implies that for $t\in\mathbb R$
\[
f=T_{\delta t}\circ f\circ T_{t}
\]which is equivalent to
\begin{align}
f_{1}(z_1,z_2)&=\exp(\delta t)f_{1}(\exp(t)z_1,z_2); \label{R1-(b) in main THM2}\\
f_{2}(z_1,z_2)&=f_{2}(\exp(t)z_1,z_2). \label{R2-(b) in main THM2}
\end{align}Taking the derivative of both sides of \eqref{R1-(b) in main THM2} with respect to $t$, we have
\[
0=\delta\exp(\delta t)f_{1}(\exp(t)z_1,z_2)+\exp(\delta t)\exp(t)z_1\dfrac{\partial f_1(\exp(t)z_1,z_2)}{\partial(\exp(t)z_1)}.
\]This relation yields
\[
0=\delta f_1(z_1,z_2)+z_{1}\frac{\partial f_1}{\partial z_1}(z_1,z_2);
\]hence one can deduce that
\[
f_{1}(z_1,z_2)=z^{-\delta}_{1}g_{1}(z_2),
\]where $g_{1}$ is a holomorphic function on a neighborhood of $z_2=0$. Moreover, since $f_1$ is a biholomorphism, the constant $\delta$ should be $-1$.

Applying the same procedure as above to \eqref{R2-(b) in main THM2}, one can also deduce that
\[
f_{2}(z_1,z_2)=g_{2}(z_2),
\]where $g_2$ is a holomorphic function on a neighborhood of $z_2=0$ with $g_{2}(0)=0$.

Now we shall determine $f$ more precisely. Since $(it-tP(z_2),z_2)\in M_{P}$, $t\in\mathbb R$, and $M_P$ is invariant under $f$, we get
\begin{equation}\label{R4-(b) in main THM2}
\begin{split}
0&=\Re\left(f_{1}(it-tP(z_2),z_2)\right)+\Im\left(f_{1}(it-tP(z_2),z_2)\right)P(f_{2}(it-tP(z_2),z_2))\\
&=\Re\left((it-tP(z_2))g_{1}(z_2)\right)+\Im\left((it-tP(z_2))g_{1}(z_2)\right)P(g_{2}(z_2)).
\end{split}
\end{equation}Since the case $g_1\equiv0$ contradicts to the fact that $f$ is biholomorphic near the origin, we may assume that $g_{1}\nequiv0$. Then \eqref{R4-(b) in main THM2} implies that
\begin{equation}\label{R5-(b) in maim THM2}
P(g_{2}(z_2))=-\dfrac{\Re\left(g_{1}(z_2)(i-P(z_2))\right)}{\Im\left(g_{1}(z_2)(i-P(z_2))\right)}
\end{equation}for sufficiently small $|z_2|\in\mathbb R$. Since $P(g_{2}(z_2))$ vanishes to infinite order at $z_2=0$, $\Re\left(g_{1}(z_2)(i-P(z_2))\right)$ also has the same property at $z_2=0$. Moreover, by the same reason, we can further say that $\Re(ig_{1}(z_2))$ vanishes to infinite order at $z_2=0$. Combining this with the fact that $g_1$ is holomorphic near $z_2=0$, we obtain
\[
g_{1}(z_2)\equiv\;\text{a constant}\;C\in\mathbb R^{*}.
\]Therefore, \eqref{R5-(b) in maim THM2} can be re-written as
\[
P(g_{2}(z_2))\equiv P(z_2).
\]near the origin. Applying Lemma~\ref{modulus g' lemma} to this relation, we also obtain $|g_{2}'(0)|=1$ which finishes the proof of $(b)$.

Altogether, we complete the proof of Theorem~\ref{main THM2}.


\subsection{Proof of Theorem~\ref{main THM3}}
$(a)$ Let $H=h_{1}(z_1,z_2)\partial_{z_1}+h_{2}(z_1,z_2)\partial_{z_2}\in\mathfrak{aut}_{0}(M_P,0)$ be arbitrary. That is, $H$ is a holomorphic vector field near the origin such that 
\[
H(0,0)=0;\;(\Re\;H)\rho(z)=0
\]for all $z\in M_{P}$. Then we define a holomorphic vector field $\widetilde H$ by setting
\[
\widetilde H:=H-\alpha z_1\partial_{z_1},\;\alpha\in\mathbb R.
\]

Now we expand the functions $h_1-\alpha z_1$ and $h_2$ into the Taylor series at the origin:
\begin{align*}
h_{1}(z_1,z_2)-\alpha z_1&=\sum_{j,k=0}^{\infty}a_{j,k}z^{j}_{1}z^{k}_{2};\\
h_{2}(z_1,z_2)&=\sum_{j,k=0}^{\infty}b_{j,k}z^{j}_{1}z^{k}_{2},
\end{align*}where $a_{j,k}, b_{j,k}\in\mathbb C$. Then it follows from $\widetilde H(0,0)=0$ that 
\[
a_{0,0}=b_{0,0}=0.
\]Moreover, since $(it-tP(z_2),z_2)\in M_{P}$ with a small enough $t\in\mathbb R$, the tangency condition for $\widetilde H$ can be written as 
\begin{equation*}
\begin{split}
&(\Re\;\widetilde H)\rho(z)\\
&=\Re\left[\left(\frac{1}{2}+\frac{P(z_2)}{2i}\right)\sum_{j,k=0}^{\infty}a_{j,k}{(it-tP(z_2))}^{j}z^{k}_{2}+tP_{z_2}(z_2)\sum_{m,n=0}^{\infty}b_{m,n}{(it-tP(z_2))}^{m}z^{n}_{2}\right]\\
&=0
\end{split}
\end{equation*}for all $z_{2}\in\mathbb C$ and $t\in\mathbb R$ with $z_2\in\triangle_{\epsilon_{0}}$ and $|t|<\delta_0$, where $\epsilon_{0}, \delta_{0}>0$ are small enough.

Applying the same argument as in the proof of Theorem~\ref{main THM1}, one can obtain the following: for all $m\in\mathbb N^{0}$ and $\ell,\ell'\in\mathbb N^{*}$,
\begin{equation}\label{R1-(a) in main THM3}
\begin{split}
&a_{0,m}=0;\\
&\Re\left[\left(\left(\frac{1+P^{2}(z_2)}{2}\right)\sum_{k=0}^{\infty}ia_{m+1,k}z^{k}_{2}+P_{z_2}(z_2)\sum_{n=0}^{\infty}b_{m,n}z^{n}_{2}\right){(i-P(z_2))}^{m}\right]\equiv0\;\text{on}\;\triangle_{\epsilon_{0}};\;\\
&\Re\left(i^{\ell}a_{\ell,0}\right)=0;\\
&a_{\ell,\ell'}=0.
\end{split}
\end{equation}With these observations, we note that the coefficients $a_{\ell,0}, \ell\in\mathbb N^{*}$, only can be candidates to be non-zero among all the coefficients $a_{j,k}$.

Now we shall show that $\widetilde H\equiv0$. Aiming for a contradiction, we suppose that $\widetilde H\nequiv0$. Since $P(z_2)$ and $P_{z_2}(z_2)$ vanish to infinite order at $z_2=0$, one can see that if $h_2\equiv0$, then the above tangency condition yields $h_{1}(z_1,z_2)=\alpha z_1, \alpha\in\mathbb R$. Therefore, in the remaining of the proof, we focus our attention only on the case when $h_{2}\nequiv0$.

We shall divide our argument into the following two cases.

\medskip

\noindent{\bf Case~1.} $h_1(z_1,z_2)-\alpha z_1\nequiv0$. In this case, let $m_0$ be the smallest integer such that $b_{m_0,n}\neq0$ for some integer $n$, and then let $n_0$ be the smallest integer such that $b_{m_0,n_0}\neq0$. Since $h_{2}(0,0)=b_{0,0}=0$, we first observe that $m_0\geq1$ if $n_0=0$. For such fixed $m_0$ and $n_0$, \eqref{R1-(a) in main THM3} yields 
\begin{equation}\label{R2-(a) in main THM3}
\Re\left[\left(\left(\frac{1+P^{2}(z_2)}{2}\right)ia_{m_0+1,0}+P_{z_2}(z_2)b_{m_0,n_0}(z^{n_0}_{2}+o({|z_2|}^{n_0}))\right){(i-P(z_2))}^{m_0}\right]\equiv0
\end{equation}on $\triangle_{\epsilon_0}$. Moreover, we remark that $P_{z_2}(z_2)=\frac{1}{2}{\widetilde P}'(x)$, where $x:=\Re(z_2)$. In addition, if $b_{m_0,n_0}\neq0$, then we get
\[
\Re\left[P_{z_2}(z_2)b_{m_0,n_0}{(i-P(z_2))}^{m_0}(z^{n_0}_{2}+o({|z_2|}^{n_0}))\right]\neq0
\]on $\triangle_{\epsilon_0}$. Indeed, if $\Re\left[P_{z_2}(z_2)b_{m_0,n_0}{(i-P(z_2))}^{m_0}(z^{n_0}_{2}+o({|z_2|}^{n_0}))\right]=0$, then the binomial theorem shows that $b_{m_0,n_0}=0$ since ${\widetilde P}^{'}(x)\nequiv0$ near $x=0$ and the functions $P(z_2)$ and $P_{z_2}(z_2)$ vanish to infinite order at $z_2=0$. This contradicts to the choice of the pair $(m_0,n_0)$ such that $b_{m_0,n_0}\neq0$. Then it follows from \eqref{R2-(a) in main THM3} that
\begin{equation}\label{R3-(a) in main THM3}
{\widetilde P}^{'}(x)=-\dfrac{\Re\left[\left(1+P^{2}(z_2)\right)i{(i-P(z_2))}^{m_0}a_{m_0+1,0}\right]}{\Re\left[b_{m_0,n_0}{(i-P(z_2))}^{m_0}(z^{n_0}_{2}+o({|z_2|}^{n_0}))\right]},
\end{equation}for all $z_2:=x+iy\in\triangle_{\epsilon_0}$ satisfying
\[
{\widetilde P}^{'}(x)\neq0;\;\Re\left[b_{m_0,n_0}{(i-P(z_2))}^{m_0}(z^{n_0}_{2}+o({|z_2|}^{n_0}))\right]\neq0.
\]If $n_0\geq1$, then the right-hand side of \eqref{R3-(a) in main THM3} depends on $x$ and $y$; however, the left-hand side of \eqref{R3-(a) in main THM3} is independent of $y$ which leads to a contradiction.

In addition, if $n_0=0$, then \eqref{R3-(a) in main THM3} yields
\[
{\widetilde P}^{'}(x)\equiv-\dfrac{\Re\left[\left(1+P^{2}(z_2)\right)i{(i-P(z_2))}^{m_0}a_{m_0+1,0}\right]}{\Re\left[b_{m_0,0}{(i-P(z_2))}^{m_0}(1+o(1))\right]}\sim\widetilde P^2(x)
\]near the origin. This implies that $\frac{{\widetilde P}^{'}(x)}{\widetilde P^2(x)}$ becomes a bounded function near the origin. Integrating $\dfrac{{\widetilde P}^{'}(x)}{\widetilde P^{2}(x)}$, we get
\[
-\dfrac{1}{\widetilde P(x)}+\dfrac{1}{\widetilde P(x_0)}=\int_{x_0}^{x}\dfrac{\widetilde P'(t)}{\widetilde P^{2}(t)}dt,
\]where $x$ and $x_0$ are in a neighborhood of the origin. In this case, we obtain $\widetilde P(x)\nrightarrow0$ as $x\rightarrow0$, which is absurd.

\medskip

\noindent{\bf Case~2.} $h_1(z_1,z_2)-\alpha z_1\equiv0$. Let $m_0$ and $n_0$ be as in Case~$1$. Since $P(z_2)=o({|z_2|}^{\ell})$ for any $\ell\in\mathbb N^{*}$, \eqref{R1-(a) in main THM3} implies that
\[
\frac{1}{2}{\widetilde P}^{'}(x)\Re\left[b_{m_0,n_0}{(i-P(z_2))}^{m_0}(z^{n_0}_{2}+o({|z_2|}^{n_0}))\right]=0
\]for all $z_2:=x+iy\in\triangle_{\epsilon_0}$. Moreover, since ${\widetilde P}^{'}(x)\nequiv0$ near the origin, we get 
\[\Re\left[b_{m_0,n_0}{(i-P(z_2))}^{m_0}(z^{n_0}_{2}+o({|z_2|}^{n_0}))\right]=0
\]for all $z_2\in\triangle_{\epsilon_0}$, which is absurd as we observed in the previous case.

Altogether, one can say that $\mathfrak{aut}_{0}(M_{P},0)=\{\alpha z_1\partial_{z_1}: \alpha\in\mathbb R\}$.

\medskip

\noindent Now it remains to show that
\[
\mathfrak{aut}(M_P,0)=\mathfrak{g}_{1}\oplus\mathfrak{g}_{0},
\]where $\mathfrak{g}_{1}=\{\alpha z_1\partial_{z_1}: \alpha\in\mathbb R\}$ and $\mathfrak{g}_{0}=\{i\beta\partial_{z_2}: \beta\in\mathbb R\}$.

In what follows, by abuse of notation, let $H=h_{1}(z_1,z_2)\partial_{z_1}+h_{2}(z_1,z_2)\partial_{z_2}$ stand for an arbitrary element of $\mathfrak{aut}(M_P,0)$ and then let $\{\phi_{t}\}_{t\in\mathbb R}\subset\mathrm{Aut}(M_P)$ be the $1$-parameter subgroup generated by the vector field $H$. Since $\phi_{t}$ is biholomorphic for every $t\in\mathbb R$, the set $\{\phi_{t}(0,0)\colon t\in\mathbb R\}$ is contained in $P_{\infty}(M_P)$.

Furthermore, since the connected component of $(0,0)$ in $P_\infty(M_{\widetilde  P})$ is $\{(it, 0)\in \mathbb C^2\colon t\in \mathbb R\}$, one can deduce that the connected component of $(0,0)$ in $P_{\infty}(M_P)$ is $\{(is,is')\in\mathbb C^2\colon s,s'\in\mathbb R\}$. Therefore, we have
\[
\phi_{t}(0,0)\subset\{(is,is')\in\mathbb C^2\colon s,s'\in\mathbb R\}.
\]This yields
\[
\Re\;h_{1}(0,0)=\Re\;h_{2}(0,0)=0.
\]Hence, the holomorphic vector field 
\[
H(z_1,z_2)-i\beta_1\partial_{z_1}-i\beta_2\partial_{z_2},
\]where $\beta_1:=\Im\;h_{1}(0,0)$ and $\beta_2:=\Im\;h_{2}(0,0)$, belongs to $\mathfrak{aut}_{0}(M_P,0)$. However, the tangency condition $-i\beta_1\partial_{z_1}\in\mathfrak{aut}(M_P,0)$ holds, only if $\beta_1=0$. This ends the proof of the assertion~$(a)$.

\medskip

\noindent $(b)$ By $(a)$, we see that $\mathfrak{aut}(M_P,0)=\mathfrak{g}_{1}\oplus\mathfrak{g}_{0}$, that is, $z_1\partial_{z_1}$ and $i\partial_{z_2}$ generate $\mathfrak{aut}(M_P,0)$.

Now let us denote by $\{T^{1}_{t}\}_{t\in\mathbb R}$ and $\{T^{2}_{t}\}_{t\in\mathbb R}$ the $1$-parameter subgroups generated by $z_1\partial_{z_1}$ and $i\partial_{z_2}$ respectively, that is, 
\[
T^{1}_{t}(z_1,z_2)=(\exp(t)z_1,z_2);\;T^{2}_{t}(z_1,z_2)=(z_1,z_2+it)
\]for $t\in\mathbb R$. For any $f=(f_1,f_2)\in\mathrm{Aut}(M_P,0)$, we define families $\{F^{j}_{t}\}_{t\in\mathbb R}$ of automorphisms by setting
\[
F^{j}_{t}:=f\circ T^{j}_{-t}\circ f^{-1}\quad(j=1,2).
\]Then it follows that $\{F^{j}_{t}\}_{t\in\mathbb R}$, $j=1,2$, are $1$-parameter subgroups of $\mathrm{Aut}(M_P)$.

Moreover, since $\mathfrak{aut}(M_P,0)=\mathfrak{g}_{1}\oplus\mathfrak{g}_{0}$, each holomorphic vector field $H^{j}$ generated by $\{F^{j}_{t}\}_{t\in\mathbb R}\;(j=1,2)$, surely belongs to $\mathfrak{g}_{1}\oplus\mathfrak{g}_{0}$. This means that there exist real numbers $\delta^{j}_{1}, \delta^{j}_{2}$, $j=1,2$, such that
\[
H^{j}=\delta^{j}_{1}z_1\partial_{z_1}+i\delta^{j}_{2}\partial_{z_2}\quad(j=1,2),
\]which yields
\[
F^{j}_{t}(z_1,z_2)=(\exp(\delta^{j}_{1}t)z_1,z_2+i\delta^{j}_{2}t)=T^{1}_{\delta^{j}_{1}t}\circ T^{2}_{\delta^{j}_{2}t}(z_1,z_2)
\]for $j=1,2$ and $t\in\mathbb R$. This implies that
\[
f=T^{1}_{\delta^{j}_{1}t}\circ T^{2}_{\delta^{j}_{2} t}\circ f\circ T^{j}_{t}\quad(j=1,2),
\]which is equivalent to
\begin{align}
f_{1}(z_1,z_2)&=\exp(\delta^{1}_{1}t)f_{1}(\exp(t)z_1,z_2); \label{R1-(b) in main THM3}\\
f_{2}(z_1,z_2)&=f_{2}(\exp(t)z_1,z_2)+i\delta^{1}_{2}t;\label{R2-(b) in main THM3}\\
f_{1}(z_1,z_2)&=\exp(\delta^{2}_{1}t)f_{1}(z_1,z_2+it); \label{R3-(b) in main THM3}\\
f_{2}(z_1,z_2)&=f_{2}(z_1,z_2+it)+i\delta^{2}_{2}t. \label{R4-(b) in main THM3}
\end{align}Taking the derivative of both sides of \eqref{R1-(b) in main THM3} with respect to $t$, we have
\[
0=\delta^{1}_{1}\exp(\delta^{1}_{1}t)f_{1}(\exp(t)z_1,z_2)+\exp(\delta^{1}_{1}t)\exp(t)z_1\dfrac{\partial f_{1}(\exp(t)z_1,z_2)}{\partial(\exp(t)z_1)}.
\]This implies that $0=\delta^{1}_{1}f_{1}(z_1,z_2)+z_1\frac{\partial f_1}{\partial z_1}(z_1,z_2)$; hence, one gets
\begin{equation}\label{R5-(b) in main THM3}
f_{1}(z_1,z_2)=z^{-\delta^{1}_{1}}_{1}g_{1}(z_2),
\end{equation}where $g_1$ is a holomorphic function on a neighborhood of $z_2=0$. Moreover, since $f_1$ is a biholomorphism, $\delta^{1}_{1}$ should be $-1$.

Now we apply the same procedure as above to \eqref{R3-(b) in main THM3}. Then we first get
\[
0=\delta^{2}_{1}\exp(\delta^{2}_{1}t)f_{1}(z_1,z_2+it)+\exp(\delta^{2}_{1}t)i\dfrac{\partial f_{1}(z_1,z_2+it)}{\partial(z_2+it)},
\]which yields
\begin{equation}\label{R6-(b) in main THM3}
0=\delta^{2}_{1}f_{1}(z_1,z_2)+i\frac{\partial f_1}{\partial z_2}(z_1,z_2).
\end{equation}Substituting \eqref{R5-(b) in main THM3} into \eqref{R6-(b) in main THM3}, we obtain
\begin{equation}\label{R7-(b) in main THM3}
0=\delta^{2}_{1}z_1 g_{1}(z_2)+i z_1\frac{dg_1}{dz_2}(z_2).
\end{equation}Then \eqref{R7-(b) in main THM3} tells us that $g_1$ has a form
\[
g_{1}(z_2)=C_1\exp(i\delta^{2}_{1}z_2),
\]where $C_1$ is a constant which will be determined more precisely later on.

Next, applying the same argument as above to \eqref{R2-(b) in main THM3} and \eqref{R4-(b) in main THM3} again, one can deduce that
\begin{align}
0&=z_1\frac{\partial f_2}{\partial z_1}(z_1,z_2)+i\delta^{1}_{2}; \label{R8-(b) in main THM3}\\
0&=i\frac{\partial f_2}{\partial z_2}(z_1,z_2)+i\delta^{2}_{2}. \label{R9-(b) in main THM3}
\end{align}It follows from \eqref{R9-(b) in main THM3} that
\begin{equation}\label{R10-(b) in main THM3}
f_{2}(z_1,z_2)=-\delta^{2}_{2}z_2+h_{1}(z_1),
\end{equation}where $h_1$ is a holomorphic function on a neighborhood of $z_1=0$, fixing the origin. Substituting \eqref{R10-(b) in main THM3} into \eqref{R8-(b) in main THM3}, we get
\[
0=z_1\frac{dh_{1}}{dz_1}(z_1)+i\delta^{1}_{2}.
\]This clearly forces that $h_1$ should be identically zero since $h_1$ is a biholomorphism fixing the origin in $\mathbb C$; hence $0=z_1\frac{dh_1}{dz_1}(z_1)+i\delta^{1}_{2}=i\delta^{1}_{2}$. Therefore, we obtain
\[
f_{2}(z_1,z_2)=-\delta^{2}_{2}z_2.
\]

Altogether, we have
\[
f(z_1,z_2)=(C_1 z_1\exp(i\delta^{2}_{1}z_2),-\delta^{2}_{2}z_2).
\]

Now we shall determine $f$ more precisely. Since $M_P$ is invariant under $f$, one can deduce that
\begin{equation}\label{R11-(b) in main THM3}
\begin{split}
0&=\Re\left(f_{1}(it-tP(z_2),z_2)\right)+\Im\left(f_{1}(it-tP(z_2),z_2)\right)P\left(f_{2}(it-tP(z_2),z_2)\right)\\
&=\Re\left((it-tP(z_2))g_{1}(z_2)\right)+\Im\left((it-tP(z_2))g_{1}(z_2)\right)P(-\delta^{2}_{2}z_2)
\end{split}
\end{equation}for sufficiently small $|z_2|, t\in\mathbb R$. Since the case $g_1\equiv0$ contradicts to the fact that $f$ is biholomorphic near the origin, we may assume that $g_1\nequiv0$; hence \eqref{R11-(b) in main THM3} implies that
\begin{equation}\label{R12-(b) in main THM3}
P(-\delta^{2}_{2}z_2)=-\dfrac{\Re\left((i-P(z_2))g_{1}(z_2)\right)}{\Im\left((i-P(z_2))g_{1}(z_2)\right)}
\end{equation}for sufficiently small $|z_2|\in\mathbb R$. Since $P(-\delta^{2}_{2}z_2)$ vanishes to infinite order at $z_2=0$, $\Re\left((i-P(z_2))g_{1}(z_2)\right)$ also has the same property at $z_2=0$. In addition, since $P(z_2)$ vanishes to infinite order at $z_2=0$, one can further say that $\Re(ig_{1}(z_2))$ vanishes to infinite order at $z_2=0$. Combining this with the fact that $g_1$ is holomorphic near $z_2=0$, we obtain
\[
g_{1}(z_2)=\;\text{a constant}\;C\in\mathbb R^{*}.
\]Note that this yields the constants 
\[
C=C_{1};\;\delta^{2}_{1}=0.
\]Therefore, \eqref{R12-(b) in main THM3} can be re-written as
\[
P(-\delta^{2}_{2}z_2)=P(z_2).
\]Then it follows from Lemma~\ref{modulus g' lemma} that $|\delta^{2}_{2}|=1$. Thus, since $\delta^{2}_{2}$ was chosen in $\mathbb R$, the only two cases appeared in the statement of this theorem can occur as desired.

\medskip

\noindent $(c)$ Now let $f\in\mathrm{Aut}(M_P)$ be arbitrary. Then $f(0,0)$ is of infinite type. It follows from the assumption $P_{\infty}(M_{P})=\{(is,is')\in\mathbb C^2\colon s,s'\in\mathbb R\}$ that
\[
f(0,0)=(is_{0},is'_{0})\;\text{for some}\;s_{0}, s'_{0}\in\mathbb R.
\]Then composing with the automorphism $T^{2}_{-s'_0}$ appeared in the proof of $(b)$ of this theorem, one can deduce that
\[
T^{2}_{-s'_{0}}\circ f(0,0)=(is_0,0).
\]Indeed, we have $s_{0}=0$: Suppose otherwise. Then 
\[
g=(g_1,g_2):=T^{2}_{-s'_0}\circ f
\]satisfies that $g(0,0)=(is_{0},0)$ for some $s_{0}\in\mathbb R^{*}$. Expanding the functions $g_1$ and $g_2$ into the Taylor series at the origin, we have
\begin{align*}
g_{1}(z_1,z_2)&:=\sum_{j,k=0}^{\infty}a_{j,k}z^{j}_{1}z^{k}_{2};\\
g_{2}(z_1,z_2)&:=\sum_{m,n=0}^{\infty}b_{m,n}z^{m}_{1}z^{n}_{2},
\end{align*}where $a_{j,k}, b_{m,n}\in\mathbb C$. Since $g(0,0)=(is_{0},0)$ for some $s_{0}\in\mathbb R^{*}$, we have 
\begin{equation}\label{additional observation-(c)1-proof of the third main theorem}
a_{0,0}=is_{0};\;b_{0,0}=0.
\end{equation}Considering the points $z_2=0$ in the relation 
\[
\mathrm{Re}\left[g_{1}(z_1,z_2)\right]+\mathrm{Im}\left[g_{1}(z_1,z_2)\right]P(g_2(z_1,z_2))=0
\]for all $(z_1,z_2)\in M_{P}$, we get
\begin{equation}\label{additional observation-(c)2-proof of the third main theorem}
a_{\ell,\ell'}=0,
\end{equation}for all $\ell\in\mathbb N^{0}$ and $\ell'\in\mathbb N^{*}$. Since $g=(g_1,g_2)$ is an automorphism, \eqref{additional observation-(c)1-proof of the third main theorem} and \eqref{additional observation-(c)2-proof of the third main theorem} imply that
\[
a_{1,0}\neq0;\;b_{0,1}\neq0.
\]Moreover, since $M_P$ is invariant under the mapping $g$, one can get 
\begin{equation}\label{additional observation-(c)3-proof of the third main theorem}
\mathrm{Re}\left[g_{1}(it-t\widetilde P(\mathrm{Re}\;z_2),z_2)\right]+
\mathrm{Im}\left[g_{1}(it-t\widetilde P(\mathrm{Re}\;z_2),z_2)\right]P\left(g_{2}(it-t\widetilde P(\mathrm{Re}\;z_2),z_2)\right)=0
\end{equation}for all $z_2\in\mathbb C$ and $t\in\mathbb R$ with $z_2\in\triangle_{\epsilon_0}$ and $|t|<\delta_0$, where $\epsilon_0, \delta_0$ are small enough. Putting $t=0$ into \eqref{additional observation-(c)3-proof of the third main theorem} and then using \eqref{additional observation-(c)1-proof of the third main theorem} and \eqref{additional observation-(c)2-proof of the third main theorem}, we obtain
\begin{equation}\label{additional observation-(c)4-proof of the third main theorem}
s_{0}\widetilde P\left(\mathrm{Re}\left(\sum_{n=1}^{\infty}b_{0,n}z^{n}_{2}\right)\right)\equiv0
\end{equation}on $z_2\in\triangle_{\epsilon_0}$. Since $s_0\neq0$, \eqref{additional observation-(c)4-proof of the third main theorem} yields
\[
\widetilde P\left(\mathrm{Re}\left(\sum_{n=1}^{\infty}b_{0,n}z^{n}_{2}\right)\right)\equiv0
\]on $z_2\in\triangle_{\epsilon_0}$. However, this is absurd, since $\widetilde P(x)\not \equiv 0$ on a neighborhood of $x=0$ in $\mathbb R$ by the assumption $\mathrm{(i)}$ and the function $h(z_2):=\sum_{n=1}^{\infty}b_{0,n}z^{n}_{2}$ is a local biholomorphism at $z_2=0$. This completes the assertion. Hence, we obtain
\[
T^{2}_{-s'_{0}}\circ f\in\mathrm{Aut}(M_P,0),
\]where $\mathrm{Aut}(M_P,0)$ is explicitly described in the proof of $(b)$ of this theorem. This completes the proof of $(c)$.

\medskip

\noindent Altogether, we finish the proof of Theorem~\ref{main THM3}.


\section{Examples}\label{two examples}

We begin with this section by demonstrating the fact that there exists a $1$-nonminimal infinite type model $(M_P,0)$ in $\mathbb C^2$ such that $P_\infty(M_P)\ne \left\{(it-tP(z_2),z_2)\colon t\in\mathbb R, z_2\in S_{\infty}(P)\right\}$ as follows.
\begin{example}\label{0-e.g.}
Fix $z_2^0\in \mathbb C^*, C\in \mathbb C$, and $t_0\in \mathbb R^*$. Then fix $r$ such that $0<r<|z_2^0|/4$. Let us denote by $\chi$ a non-negative $\mathcal{C}^\infty$-smooth cut-off function on $\mathbb C$ such that
\[  
  \chi(z)=\begin{cases}
1\;&\text{if}\;|z|<r,\\
0\;&\text{if}\;|z|>2r.
  \end{cases}
  \]
Denote by $P$ a $\mathcal{C}^\infty$-smooth function defined on $\mathbb C$ by setting
\[
P(z_2)=\chi(z_2)\exp\left(-1/{|z_2|^2}\right)+\chi(z_2-z_2^0)\Big(C-\dfrac{\mathrm{Re}(z_2-z_2^0)+C\;\mathrm{Im}(z_2-z_2^0) }{t_0+\mathrm{Im}(z_2-z_2^0)} \Big)
\]
Then, one can see that $P(z_2^0)=C$ and
\begin{align}\label{counterexample-eqtn}
P(z_2^0+t)-P(z_2^0)=-\dfrac{\mathrm{Re}(t)+\mathrm{Im}(t) P(z_2^0)}{t_0+\mathrm{Im}(t)}
\end{align}
for all $t\in \Delta_{r}$. Consequently, one has $\nu_{z^0_2}(P)<+\infty$ and hence $z^0_2\not \in S_\infty(P)$. 

Let us define
$\mathcal{E}=\{(z_1,z_2)\in \mathbb C^2\colon \rho(z):= \mathrm{Re}(z_1)+\mathrm{Im}(z_1) P(z_2)=0\}$. We now prove that $\mathcal{E}$ contains an analytic set passing the point $(it_0-t_0 P(z_2^0), z_2^0)$. Let $z_1(t)$, $t\in \Delta_r$, be any non-zero holomorphic function with $z_1(0)=0$. Then, let $\gamma\colon \Delta_r\to \mathbb C^2$ be a holomorphic curve defined by
$\gamma(t)=(it_0-t_0 P(z_2^0)+z_1(t), z_2^0+z_1(t))$. Using the relation \eqref{counterexample-eqtn}, one can deduce that
\begin{align*}
\rho\circ \gamma(t)&=\mathrm{Re}(it_0-t_0 P(z_2^0)+z_1(t))+\mathrm{Im}(it_0-t_0 P(z_2^0)+z_1(t)) P(z_2^0+z_1(t))\\
&=-t_0 P(z_2^0)+\mathrm{Re}(z_1(t))+(t_0+\mathrm{Im}(z_1(t))) P(z_2^0+z_1(t)) \\
&=-t_0 P(z_2^0)+\mathrm{Re}(z_1(t))+(t_0+\mathrm{Im}(z_1(t))) P(z_2^0))\\
&\quad +(t_0+\mathrm{Im}(z_1(t)))\left(P(z_2^0+z_1(t))-P(z_2^0)\right)\\
&=\mathrm{Re}(z_1(t))+\mathrm{Im}(z_1(t)) P(z_2^0))+(t_0+\mathrm{Im}(z_1(t)))\left( P(z_2^0+z_1(t))-P(z_2^0)\right)\\
&=0
\end{align*}for all $t\in \Delta_r$, and the assertion hence follows.

Since $\mathcal{E}$ contains an analytic set passing the point $(it_0-t_0 P(z_2^0), z_2^0)$, we have $\tau(\mathcal{E},(it_0-t_0P(z_2^0), z_2^0) )=+\infty$. Thus, since $z_2^0\not\in  S_{\infty}(P)$, we obtain
\[
P_\infty(\mathcal{E})\ne \left\{(it-tP(z_2),z_2)\colon t\in\mathbb R, z_2\in S_{\infty}(P)\right\}.
\]
\end{example}

Now we shall investigate several examples as analogues of those in \cite[Section~$6$]{HN2016}.
\begin{example}\label{first e.g.}
Consider the model $M_{P_1}$, where $P_1$ is defined by setting
\begin{equation*}
P_{1}(z):=\left\{
\begin{aligned}
\exp\left(-1/{|z|}^a\right)&\quad\text{if}\;z\neq0,\\
0&\quad\text{if}\;z=0,
\end{aligned}
\right.
\end{equation*}where $a>0$. Then it is easily seen that $M_{P_1}$ satisfies the assumptions of Theorem~\ref{main THM1} and Theorem~\ref{main THM2}. Since $P_1\nequiv0$ near the origin in $\mathbb C$ and $P_1$ is rotationally symmetric, Theorem~\ref{main THM1} and Theorem~\ref{main THM2}~$(a)$ show that 
\[
\mathfrak{aut}(M_{P_1},0)=\mathfrak{aut}_{0}(M_{P_1},0)=\{\alpha z_1\partial_{z_1}+i\beta z_2\partial_{z_2}: \alpha, \beta\in\mathbb R\}.
\]In addition, we obtain
\[
\mathrm{Aut}(M_{P_1},0)=\{(z_1,z_2)\mapsto(sz_1,\exp(it)z_2)\colon s\in\mathbb R^{*}, t\in\mathbb R\},
\]which is clear from Theorem~\ref{main THM2}~$(b)$.
\end{example}

\begin{example}
Consider the model $M_{P_2}$, where $P_2$ is defined by setting
\begin{equation*}
P_{2}(z):=\left\{
\begin{aligned}
\exp\left(-1/{|z|}^a+\Re\;z\right)&\quad\text{if}\;z\neq0,\\
0&\quad\text{if}\;z=0,
\end{aligned}
\right.
\end{equation*}where $a>0$.

In this case we first observe that, by definition, $M_{P_2}$ satisfies the assumptions of Theorem~\ref{main THM1} and Theorem~\ref{main THM2}. In contrast with the previous example, $P_2$ is not rotationally symmetric, but $P_2$ is also not identically zero near the origin in $\mathbb C$. Then Theorem~\ref{main THM1} and Theorem~\ref{main THM2}~$(a)$ imply that
\[
\mathfrak{aut}(M_{P_2},0)=\mathfrak{aut}_{0}(M_{P_2},0)=\{\alpha z_1\partial_{z_1}\colon \alpha\in\mathbb R\}.
\]In addition, it follows from Theorem~\ref{main THM2}~$(b)$ that
\[
\mathrm{Aut}(M_{P_2},0)=G_{2}(M_{P_2},0).
\]
\end{example}

\begin{example}
Consider the model $M_{P_3}$, where $P_3$ is defined by setting
\begin{equation*}
P_{3}(z):=\left\{
\begin{aligned}
\exp\left(-1/{|\Re\;z|}^a\right)&\quad\text{if}\;\Re\;z\neq0,\\
0&\quad\text{if}\;\Re\;z=0,
\end{aligned}
\right.
\end{equation*}where $a>0$. Let us define a function $\widetilde P(z)$ by setting
\[
\widetilde P(z):=P_{1}(z),
\]where $P_{1}$ is given in the above Example~\ref{first e.g.}. Then it is easy to check that $\widetilde P$ allows the assumption of Theorem~\ref{main THM3}, and $P_{3}(z):=\widetilde P(\Re\;z)$. Combining the discussion in Example~\ref{first e.g.} with Theorem~\ref{main THM3}, one can see that 
\begin{align*}
\mathfrak{aut}_{0}(M_{P_3},0)&=\{\alpha z_1\partial_{z_1}\colon \alpha\in\mathbb R\};\\
\mathfrak{aut}(M_{P_3},0)&=\{\alpha z_1\partial_{z_1}+i\beta\partial_{z_2}\colon \alpha,\beta\in\mathbb R\};\\
\mathrm{Aut}(M_{P_3},0)&=\{(z_1,z_2)\mapsto (sz_1,\pm z_2)\colon s\in\mathbb R^{*}\};\\
\mathrm{Aut}(M_{P_3})&=\{(z_1,z_2)\mapsto (sz_1,\pm z_2+it)\colon s\in\mathbb R^{*}, t\in\mathbb R\}.
\end{align*}

\end{example}

\medskip

\appendix
\section*{Appendix}
\addcontentsline{toc}{section}{Appendices}
In this Appendix, we shall describe an analogue of Theorem~\ref{main THM1} for \emph{$m$-nonminimal infinite type models} with $m>1$ in $\mathbb C^2$. Let us consider a $C^\infty$-smooth hypersurface $(M_{P,m},0)$ with $m>1$ in $\mathbb C^2$ defined by 
\[
M_{P,m}:=\{(w,z)\in\mathbb C^2\colon \rho(w,z):=\Im\;w-{(\Re\;w)}^{m}P(z)=0\},
\]where $P(z)$ is a $C^\infty$-smooth function on a neighborhood of the origin in $\mathbb C$ satisfying the two above conditions $(i)$ and $(ii)$ in Theorem~\ref{main THM1}.

Let $H=h_{1}(w,z)\partial_w+h_2(w,z)\partial_z\in\mathfrak{aut}_{0}(M_{P,m},0)$ be arbitrary. That is, $H$ is a holomorphic vector field near the origin in $\mathbb C^2$ such that 
\[
(\Re\;H)\rho(w,z)=0;\;H(0,0)=0
\]for all $(w,z)\in M_{P,m}$. Expanding the functions $h_1$ and $h_2$ into the Taylor series at the origin, 
\begin{align*}
h_{1}(w,z)&=\sum_{j,k=0}^{\infty}a_{j,k}w^jz^k;\\
h_{2}(w,z)&=\sum_{\ell,n=0}^{\infty}b_{\ell,n}w^\ell z^n,
\end{align*}where $a_{j,k}, b_{\ell,n}\in\mathbb C$. 

Since $(t+it^{m}P(z),z)\in M_{P,m}$ with $t\in\mathbb R$ small enough, the above tangency condition admits the following form:
\begin{equation}\label{tangency condition for s-nonminimal}
\begin{split}
&\Re{\left[\left(\frac{1}{2i}-\frac{mt^{m-1}P(z)}{2}\right)\sum_{j,k=0}^{\infty}a_{j,k}{(t+it^m P(z))}^{j}z^k
-t^{m}P_{z}(z)\sum_{\ell,n=0}^{\infty}b_{\ell,n}{(t+it^m P(z))}^{\ell}z^n\right]}\\
&=0
\end{split}
\end{equation}for all $z\in\mathbb C$ and $t\in\mathbb R$ with $z\in\triangle_{\epsilon_{0}}$ and $|t|<\delta_0$, where $\epsilon_0,\delta_0>0$ are sufficiently small. Since $P(z)$ and $P_z(z)$ vanish to infinite order at $z=0$, it follows from \eqref{tangency condition for s-nonminimal} that
\begin{equation}\label{conditions on a in Appendix}
\left\{
\begin{aligned}
a_{0,0}&=0;\\
\Im\left(a_{s,0}\right)&=0,\;\forall s\in\mathbb N^{*};\\
a_{s',\ell'+1}&=0,\;\forall s',\ell'\in\mathbb N^{0}.
\end{aligned}
\right.
\end{equation}Then \eqref{tangency condition for s-nonminimal} can be re-written as
\begin{equation}\label{re-written t.c. for s-nonminimal}
\begin{split}
&\Re\left[\frac{1}{2}P(z)\sum_{j=0}^{\infty}(j-m)a_{j,0}t^{m+j-1}-P_{z}(z)\sum_{\ell,n=0}^{\infty}b_{\ell,n}z^n t^{m+\ell}\right.\\
&\quad\quad\left.-iP(z)P_{z}(z)\sum_{\ell'=1}^{\infty}\sum_{n=0}^{\infty}\ell' b_{\ell',n}z^n t^{2m+\ell'-1}+o(|P(z)|)\right]\\
&=0
\end{split}
\end{equation}for all $z\in\triangle_{\epsilon_{0}}$ and $t\in\mathbb R$ sufficiently small.

If $h_2\equiv0$, then after considering the coefficient of $t^{m+j-1}$ for each $j$ in \eqref{re-written t.c. for s-nonminimal}, one can deduce that $a_{j,0}=0$ for all $j\neq m$. Then we obtain
\[
H=a_{m,0}w^{m}\partial_w.
\]Let us denote by $\{\varphi_{t}\}_{t\in\mathbb R}:=\{(\varphi^1(t),\varphi^2(t))\}_{t\in\mathbb R}\subset\mathrm{Aut}(M_{P,m},0)$ the $1$-parameter subgroup generated by $w^{m}\partial_w$, that is, for $t\in\mathbb R$
\[
\frac{d\varphi^{1}}{dt}(t)={(\varphi^1(t))}^{m};\;\frac{d\varphi^2}{dt}(t)=0
\]with $(\varphi^1(0),\varphi^2(0))=(w,z)\in M_{P,m}$. On the other hand, since $m>1$, the solution $\varphi_{t}$ of this initial value problem is not invertible, hence $\{\varphi_{t}\}_{t\in\mathbb R}\nsubset\mathrm{Aut}(M_{P,m},0)$ which leads to a contradiction. Hence, if $h_2\equiv0$, then we must have $h_1\equiv0$. For this reason, in the remaining of the proof, we always assume that $h_2\nequiv0$ without loss of generality.

Let $m_0$ be the smallest integer such that $b_{m_0,n}\neq0$ for some $n\in\mathbb N^{0}$. Then we let $n_0$ be the smallest integer such that $b_{m_0,n_0}\neq0$. Since $b_{0,0}=0$, it is clear that $m_0\geq1$ if $n_0=0$. We shall divide the argument into the following three cases.

\noindent{\bf Case~1.} $0\leq m_0< m-1$. Considering the coefficient of $t^{m+m_0}$ in \eqref{re-written t.c. for s-nonminimal}, we get
\begin{equation}\label{refined tangency condition-Case1-Appendix-fixing the origin}
\Re\left[\frac{1}{2}P(z)(m_0+1-m)a_{m_0+1,0}-P_{z}(z)\sum_{n=0}^{\infty}b_{m_0,n}z^n\right]\equiv0
\end{equation}on $\triangle_{\epsilon_{0}}$. In this case, by Proposition~\ref{technical lemma1:THM1}, we obtain $n_0=1$ and $b_{m_0,1}=i\beta$ for some $\beta\in\mathbb R^{*}$. Then, by a change of variables~(cf.~\cite[Lemma~$1$]{N2013}), we may assume that
\[
b_{m_0}(z):=\sum_{n=0}^{\infty}b_{m_0,n}z^n=i\beta z.
\]

Let $r\in(0,\epsilon_0)$ be an arbitrary number such that $P(r)\neq0$ and then let $v(t):=P(r\exp(it))$ for all $t\in\mathbb R$. Combining these relations with the above condition \eqref{refined tangency condition-Case1-Appendix-fixing the origin}, one gets
\[
\dfrac{v'(t)}{v(t)}=(m_0+1-m)\frac{a_{m_0+1,0}}{\beta}.
\]Integrating this, we obtain
\[
v(t)=v(0)\exp\left((m_0+1-m)\frac{a_{m_0+1,0}}{\beta}t\right)
\]for all $t\in\mathbb R$. Here, without loss of generality, we may take $\frac{a_{m_0+1,0}}{\beta}$ as a positive number. Then as $t\rightarrow+\infty$, we get $v(t)\rightarrow0$ and hence $P(r\exp(it))\rightarrow P(0)=0$, which contradicts to our choice of $r\in(0,\epsilon_0)$~(consider the associated limit of $v(t)$ as $t\rightarrow-\infty$ if $\frac{a_{m_0+1,0}}{\beta}<0$).

\medskip

\noindent{\bf Case~2.} $m_0=m-1$. In this case, we first note that $m+m_0=2m-1$. Considering the coefficient of $t^{2m-1}$ in \eqref{re-written t.c. for s-nonminimal}, we have
\begin{equation}\label{refined tangency condition-Case2-Appendix-fixing the origin}
\Re\left[P_{z}(z)\sum_{n=0}^{\infty}b_{m_0,n}z^n\right]\equiv0
\end{equation}on $\triangle_{\epsilon_0}$. Applying \cite[Corollary~$4$]{HN2016} to this relation, one can obtain
\[
n_0=1;\;\Re\left(b_{m_0,n_0}\right)=\Re\left(b_{m_0,1}\right)=0.
\]Therefore, by a change of variables, we may assume that
\[
b_{m_0}(z):=\sum_{n=0}^{\infty}b_{m_0,n}z^n=i\tilde\beta z
\]for some $\tilde\beta\in\mathbb R^{*}$. Combining this with the above condition \eqref{refined tangency condition-Case2-Appendix-fixing the origin}, we get
\[
\Re\left[i\tilde\beta zP_{z}(z)\right]\equiv0
\]on $\triangle_{\epsilon_0}$. This implies that $P(z)\equiv P(|z|)$ on $\triangle_{\epsilon_0}$.

We now prove that $b_{\ell}(z):=\sum_{n=0}^{\infty}b_{\ell,n}z^n=0$ for every $\ell\geq m$: suppose otherwise. Then there exists the smallest number $m_1\in\mathbb N^{*}$ such that $b_{m_1}\nequiv0$ and $m_1\geq m$. By the same argument as above, we may assume that $b_{m_1}(z)\equiv i\tilde\beta_1 z+o(|z|)$ for some $\tilde\beta_1\in\mathbb R^{*}$ on $\triangle_{\epsilon_0}$. Moreover, we indeed have $b_{m_1}(z)=i\tilde\beta_1 z$ for some $\tilde\beta_1\in\mathbb R^{*}$: suppose otherwise. Then there exist $k_0\geq2$ and $\tilde c_{k_0}\in\mathbb C^{*}$ such that
\begin{equation}\label{b-m1 in appendix-case2}
b_{m_1}(z)=i\tilde\beta_1 z+\tilde c_{k_0}z^{k_0}+o({|z|}^{k_0}).
\end{equation}Considering $\ell=m_1$ separately in \eqref{tangency condition for s-nonminimal} and then using \eqref{conditions on a in Appendix}, we get 
\begin{equation}\label{refined condition in Case2-appendix}
\begin{split}
&\Re\left[\left(\frac{1}{2i}-\frac{mt^{m-1}P(z)}{2}\right)\sum_{j=0}^{\infty}a_{j,0}{(t+it^mP(z))}^j-t^{m}P_{z}(z)\sum_{n=0}^{\infty}b_{m_1,n}{(t+it^m P(z))}^{m_1}z^n\right.\\
&\quad\left.-t^mP_{z}(z)\sum_{\ell>m_1}b_{\ell}(z){(t+it^mP(z))}^{\ell}\right]\\
&=0
\end{split}
\end{equation}for all $z\in\triangle_{\epsilon_0}$ and $t\in\mathbb R$ sufficiently small. Considering the coefficient of $t^{m+m_1}$ in \eqref{refined condition in Case2-appendix} and then using \eqref{conditions on a in Appendix} and \eqref{b-m1 in appendix-case2}, one can get
\begin{equation}\label{t.c. in appendix for m+m1}
\Re\left[\frac{1}{2}P(z)(m_1+1-m)a_{m_1+1,0}-P_{z}(z)(i\tilde\beta_1 z+\tilde c_{k_0}z^{k_0}+o({|z|}^{k_0}))+o(|P(z)|)\right]\equiv0
\end{equation}on $\triangle_{\epsilon_0}$. Considering again the coefficient of $t^{m+m_1}$ in \eqref{refined condition in Case2-appendix} and then using \eqref{conditions on a in Appendix}, for some $\tilde n_0\in\mathbb N^{0}$ we have
\begin{equation}\label{second refined condition in Case2-appendix}
\Re\left[\frac{1}{2}P(z)(m_1+1-m)a_{m_1+1,0}-P_{z}(z)b_{m_1,\tilde n_0}(z^{\tilde n_0}+o(|z|^{\tilde n_0}))+o(|P(z)|)\right]\equiv0
\end{equation}on $\triangle_{\epsilon_0}$. If $a_{m_1+1,0}\neq0$, then Proposition~\ref{technical lemma1:THM1} yields $\tilde n_0=1$ and $b_{m_1,\tilde n_0}=i\tilde\beta_1$. In addition, if $a_{m_1+1,0}=0$, then \eqref{second refined condition in Case2-appendix} leads to a contradiction to Proposition~\ref{technical lemma1:THM1}. Combining this with the subtraction of \eqref{second refined condition in Case2-appendix} from \eqref{t.c. in appendix for m+m1}, we get
\[
\Re\left[P_{z}(z)\tilde c_{k_0}(z^{k_0}+o({|z|}^{k_0}))\right]\equiv0
\]on $\triangle_{\epsilon_0}$, which contradicts to \cite[Corollary~$4$]{HN2016}. 

Altogether, in this case, we obtain $h_2(w,z)\equiv i\tilde\beta z$ and $P(z)\equiv P(|z|)$ for some $\tilde\beta\in\mathbb R^{*}$ on $\triangle_{\epsilon_0}$.

\medskip

\noindent{\bf Case~3.} $m_0\geq m$. Considering the coefficient of $t^{m+m_0}$ in \eqref{tangency condition for s-nonminimal} and then using \eqref{conditions on a in Appendix}, we get
\begin{equation}\label{Case3-Appendix, tangency condition}
\begin{split}
&\Re\left[\frac{(m_0+1-m)}{2}a_{m_0+1,0}P(z)\right.\\
&\left.\quad-P_{z}(z)\left(b_{m_0,n_0}z^{n_0}+o({|z|}^{n_0})+i(m_0+1-m) P(z)b_{m_0+1-m}(z)+o(|P(z)|)\right)\right]\\
&\equiv O({|P(z)|}^{2})
\end{split}
\end{equation}on $\triangle_{\epsilon_0}$. 

Since $O({|P(z)|}^{2})/P(z)\in o(1)$ and $m_0+1-m\neq0$, if $\Re(a_{m_0+1,0})=a_{m_0+1,0}=0$, then our situation reduces to 
$(\mathrm{E2})$ in \cite[Lemma~$3$]{KN2015}, which leads to a contradiction. Hence, in this case, we must have $a_{m_0+1,0}\neq0$. Moreover, by \cite[Lemma~$3$]{KN2015}, one can get $n_0=1$ and $b_{m_0,1}=i\beta_2$ for some $\beta_2\in\mathbb R^{*}$. Then \eqref{Case3-Appendix, tangency condition} yields
\begin{equation}\label{Case3-Appendix, a relation to make an ode}
\Re\left[i\beta_2 zP_{z}(z)\right]\equiv\left(\tilde\delta+\tilde\epsilon(z)\right)P(z),
\end{equation}where $\tilde\delta:=\mathrm{Re}((m_0+1-m)a_{m_0+1,0}/2)$ and $\tilde\epsilon:\triangle_{\epsilon_0}\rightarrow\mathbb R$ is a smooth function with the condition that $\tilde\epsilon(z)\rightarrow0$ as $z\rightarrow0$. Without loss of generality, we may assume that $\tilde\delta<0$ and $|\tilde\epsilon(z)|<|\tilde\delta|/2$ on $\triangle_{\epsilon_0}$.

Let $r\in(0,\epsilon_0)$ such that $P(r)\neq0$. Then we let $\gamma:[t_0,+\infty)\rightarrow\mathbb C$ such that $\gamma'(t)=i\beta_2\gamma(t)$ and $\gamma(t_0)=r$. Then setting $u(t)=\frac{1}{2}\log|P(\gamma(t))|$, \eqref{Case3-Appendix, a relation to make an ode} shows that $u'(t)=\tilde\delta+\tilde\epsilon(\gamma(t))$. Hence, we get
\begin{equation}\label{Case3-Appendix, FTO}
u(t)-u(t_0)=\tilde\delta(t-t_0)+\int_{t_0}^{t}\tilde\epsilon(\gamma(\tau))d\tau,\;\forall t\geq t_0.
\end{equation}This implies that $u(t)\rightarrow-\infty$ as $t\rightarrow\infty$, and hence $\gamma(t)\rightarrow0$ as $t\rightarrow+\infty$. 

On the other hand, $\gamma'(t)=i\beta_2\gamma(t)$ and $\gamma(t_0)=r$ imply that $\gamma(t)=r\exp(i\beta_2 t)$. Then we get $\gamma(t)\nrightarrow 0$ as $t\rightarrow+\infty$, which contradicts to the above discussion right after \eqref{Case3-Appendix, FTO}.

\medskip

Now we shall show that 
\[
h_1(w,z)\equiv0,
\]if $P(z)\equiv P(|z|)$ and $h_2(w,z)\equiv i\tilde\beta z$ for some $\tilde\beta\in\mathbb R^{*}$ on $\triangle_{\epsilon_0}$. Suppose otherwise. Then it follows from \eqref{conditions on a in Appendix} that there exists the smallest number $j_0\in\mathbb N^{*}$ such that $a_{j_{0},0}\neq0$. Since $P(z)\equiv P(|z|)$ and $h_2(w,z)\equiv i\tilde\beta z$ for some $\tilde\beta\in\mathbb R^{*}$ on $\triangle_{\epsilon_0}$, using \eqref{tangency condition for s-nonminimal} and \eqref{conditions on a in Appendix}, one can deduce that for all $t\in\mathbb R$ sufficiently small
\begin{equation}\label{Case3-Appendix,special tc}
\Re\left[\left(\frac{1}{2i}-\frac{mt^{m-1}P(z)}{2}\right)\sum_{j=0}^{\infty}a_{j,0}{(t+i t^{m}P(z))}^{j}\right]\equiv0
\end{equation}on $\triangle_{\epsilon_0}$. For a fixed $t\in\mathbb R$ small enough, we may regard \eqref{Case3-Appendix,special tc} as a polynomial of a variable $P(z)$ on $\triangle_{\epsilon_0}$. Collecting the terms of degree $1$ with respect to $P(z)$ in \eqref{Case3-Appendix,special tc}, for a fixed $t\in\mathbb R$ small enough, we get
\[
P(z)\Re\left[\sum_{j=0}^{\infty}\frac{1}{2}(j-m)a_{j,0}t^{m+j-1}\right]\equiv0
\]on $\triangle_{\epsilon_0}$. Since the connected component of $z=0$ in the zero set of $P$ is $\{0\}$, we have 
\[
\Re\left[\sum_{j=0}^{\infty}\frac{1}{2}(j-m)a_{j,0}t^{m+j-1}\right]=0
\]for a fixed $t\in\mathbb R$ sufficiently small. Moreover, since $t$ can be chosen arbitrarily small, we first have
\[
\Re\left(a_{j,0}\right)=0,\;\forall j\in\mathbb N^{*}\setminus\{m\}.
\]Combining this with the second relation of \eqref{conditions on a in Appendix}, one can deduce that
\begin{equation}\label{pre-conclusion on a in Appendix}
a_{j,0}=0,\;\forall j\in\mathbb N^{*}\setminus\{m\}.
\end{equation}Then it follows from \eqref{conditions on a in Appendix} and \eqref{pre-conclusion on a in Appendix} that $a_{m,0}$ can be a unique candidate to be non-zero among all the possible $a_{j,k}$'s. For this reason, we now assume that
\[
H(w,z):=a_{m,0}w^{m}\partial_{w}+i\tilde\beta z\partial_{z}\in\mathfrak{aut}_{0}(M_{P,m},0)
\]for some $a_{m,0}\in\mathbb R$ and $\tilde\beta\in\mathbb R^{*}$. 

Let us denote by $\{\varphi_{t}\}_{t\in\mathbb R}:=\{(\varphi^{1}(t),\varphi^{2}(t))\}_{t\in\mathbb R}\subset\mathrm{Aut}(M_{P,m},0)$ the $1$-parameter subgroup generated by the vector field $H$, that is, for $t\in\mathbb R$
\[
\frac{d\varphi^{1}}{dt}(t)=a_{m,0}{(\varphi^{1}(t))}^{m};\;\frac{d\varphi^{2}}{dt}(t)=i\tilde\beta\varphi^{2}(t)
\]with $(\varphi^{1}(0),\varphi^{2}(0))=(w,z)\in M_{P,m}$. However, we note that the solution $\varphi_{t}$ is not invertible, if $m>1$ and $a_{m,0}\neq0$. This tells us that if $m>1$ and $H\in\mathfrak{aut}_{0}(M_{P,m},0)$, then we should have $a_{m,0}=0$.

Altogether, we conclude that if $P(z)\equiv P(|z|)$ and $h_{2}(w,z)\equiv i\tilde\beta z$ for some $\tilde\beta\in\mathbb R^{*}$ on $\triangle_{\epsilon_0}$, then $h_{1}(w,z)\equiv0$ holds, as desired.

\bigskip

\begin{acknowledgement}Part of this work was done while the first and last authors were visiting the Vietnam Institute for Advanced Study in Mathematics (VIASM). They would like to thank the VIASM for financial support and hospitality. The first author was supported by the Vietnam National Foundation for Science and Technology Development (NAFOSTED) under grant number 101.02-2017.311. The last author was supported by the National Research Foundation of Korea under grant number NRF-2018R1D1A1B07044363.
\end{acknowledgement}

\bibliographystyle{plain}

\end{document}